\newtheorem{thm}{Theorem}
\newtheorem{dfn}[thm]{Definition}
\newtheorem{lem}[thm]{Lemma}
\newtheorem{cor}[thm]{Corollary}
\newtheorem{conjecture}[thm]{Conjecture}
\newtheorem{rem}[thm]{Remark}
\newtheorem{obs}[thm]{Observation}
\newtheorem{clm}[thm]{Claim}
\DeclareMathOperator{\reg}{reg}
\DeclareMathOperator{\per}{per}
\title{\vspace{-1cm} Counting Hamilton decompositions of oriented graphs}
\author{Asaf Ferber\thanks{Department of Mathematics, MIT, USA. Email:
ferbera@mit.edu.} \and Eoin Long\thanks{School of Mathematical Sciences, Tel Aviv University, Tel Aviv, Israel. Email: eoinlong@post.tau.ac.il.} \and Benny Sudakov\thanks{Department of Mathematics, ETH, Z\"urich, Switzerland. Email: benjamin.sudakov@math.ethz.ch.}}
\date{}
\begin{document}

\maketitle

\begin{abstract}
A Hamilton cycle in a directed graph $G$ is a cycle that passes through every vertex of $G$.
A Hamiltonian decomposition of $G$ is a partition of its edge set into disjoint Hamilton cycles.
In the late $60$s Kelly conjectured that every regular tournament has a Hamilton decomposition. This conjecture was recently settled  by K\"uhn and Osthus \cite{KO}, who proved more generally that 
every $r$-regular $n$-vertex oriented graph $G$ (without antiparallel edges) with $r=cn$ for some fixed $c>3/8$ has a Hamiltonian decomposition, provided $n=n(c)$ is sufficiently
large. In this paper we address the natural question of estimating the number of such decompositions of $G$ and show that this number is $n^{(1-o(1))cn^2}$. 
In addition, we also obtain a new and much simpler proof for the approximate version of Kelly's conjecture.
\end{abstract}

\section{Introduction}

A \emph{Hamilton cycle} in a graph or a directed graph $G$ is a cycle
passing through every vertex of $G$ exactly once, and a graph
is \emph{Hamiltonian} if it contains a Hamilton cycle. Hamiltonicity
is one of the most central notions in graph theory, and has been
intensively studied by numerous researchers in recent
decades. The decision problem of whether a given graph contains a Hamilton cycle is known to be $\mathcal{NP}$-hard and in fact, already appears on Karp's original list
of 21 $\mathcal{NP}$-hard problems \cite{karp1972reducibility}.
Therefore, it is important to find general sufficient conditions for
Hamiltonicity (for a detailed discussion of this topic we refer the interested reader to two surveys of K\"uhn and Osthus \cite{KOI,KOSurvey}).

In this paper we discuss Hamiltonicity problems for directed graphs. A \emph{tournament} $T_n$ on $n$ vertices is an orientation of an $n$-vertex complete graph $K_n$. The tournament is \emph{regular} if all in/outdegrees are the same and equal $(n-1)/2$. It is an easy exercise to show that every tournament contains a Hamilton path (that is, a directed path passing through all the vertices). Moreover, one can further show that a regular tournament contains a Hamilton cycle. 

A tournament is a special case of a more general family of directed graphs,  so called \emph{oriented} graphs. An oriented graph is a directed graph obtained by orienting the edges of a simple graph (that is, a graph without loops or multiple edges). Given an oriented graph $G$, let $\delta^+(G)$ be its minimum outdegree, $\delta^-(G)$ be its minimum indegree and let
the \emph{semi-degree} $\delta^0(G)$ be the minimum of $\delta^+(G)$ and $\delta^-(G)$. A natural question, originally raised by Thomassen in the late 70s, asks to determine the minimum semi-degree which ensures Hamiltonicity in the oriented setting. Following a long line of research,  Keevash, K\"uhn and  Osthus \cite{KeeKO} settled this problem, showing that $\delta^0(G)\geq \lceil \frac{3n-4}{8}\rceil$ is enough to obtain a Hamilton cycle in any $n$-vertex oriented graph. A construction showing that this is tight was obtained much earlier by H\"aggkvist \cite{Haa}.

Once Hamiltonicity of $G$ has been established, it is natural to further ask whether $G$ contains many edge-disjoint Hamilton cycles or even a \emph{Hamilton decomposition}. A Hamilton decomposition is a collection of edge-disjoint Hamilton cycles covering all the edges of a graph. In the late $60$s, Kelly conjectured (see \cite{KOI,KOSurvey} and their references) that every regular tournament has Hamilton decomposition. Kelly's Conjecture has been studied extensively in recent decades, and quite recently was settled for large tournaments in a remarkable tour de force by K\"uhn and Osthus \cite{KO}. In fact, K\"uhn and Osthus \cite{KO} proved the following  stronger statement for dense \emph{$r$-regular} oriented graphs (that is, oriented graphs with all in/outdegrees equal to $r$).

\begin{thm}
\label{Kelly}
  Let $\epsilon>0$ and let $n$ be a sufficiently large integer. Then, every $r$-regular oriented graph $G$ on $n$ vertices with $r\geq 3n/8+\epsilon n$ has a Hamilton decomposition.
\end{thm}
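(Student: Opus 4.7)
My plan is to adapt the robust outexpansion plus absorbing machinery to produce a full Hamilton decomposition.

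First I would verify that the hypothesis $r \geq 3n/8 + \epsilon n$ forces $G$ to be a \emph{robust $\nu$-outexpander} for some $\nu=\nu(\epsilon)>0$: for every $S\subseteq V(G)$ with $\nu n\leq |S|\leq (1-\nu)n$, the robust outneighbourhood $\{v: |N^-(v)\cap S|\geq \nu n\}$ has size at least $|S|+\nu n$. A direct double-counting argument using the uniform semi-degree $r$ delivers this, and the threshold $3/8$ is tight because of H\"aggkvist's extremal example.

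Next I would set aside a small nearly-regular random sparsification $G_R\subseteq G$ of semi-degree $\eta r$ (with $\eta\ll \epsilon$) and build from it $r$ edge-disjoint ``absorbing skeletons'' --- short chord/path systems designed so that any sufficiently sparse and balanced leftover graph $L\subseteq G\setminus G_R$ can be merged into them to produce Hamilton cycles. The remaining graph $G_M:=G\setminus G_R$ is still a robust outexpander (with a slightly weaker constant), and I would use this to iteratively extract $(1-o(1))r$ edge-disjoint Hamilton cycles. At each step one runs a greedy or nibble-style Hamilton-cycle construction inside a robust outexpander and shows that, with positive probability, the removal leaves a graph which is again a robust outexpander and nearly regular; this is feasible because each Hamilton cycle touches every vertex exactly twice, and standard concentration inequalities control the leftover degree sequence.

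After $(1-o(1))r$ iterations we obtain a sparse, almost-regular leftover $L\subseteq G_M$. Using the absorbing skeletons from $G_R$ together with $L$, I would finally complete the Hamilton decomposition by closing up each skeleton into a full Hamilton cycle using exactly the edges of $L$ assigned to it. The main obstacle is precisely this absorbing step: the skeletons constructed in $G_R$ must be flexible enough to handle \emph{any} possible $L$, and the iteration in $G_M$ must produce a leftover with matching degree distribution --- essentially requiring the two randomised constructions to align sharply. Calibrating the parameters $\epsilon,\eta,\nu$ and the number of iterations so that these constraints are compatible is the delicate part of the argument; the approximate version of Kelly's conjecture promised in the abstract avoids this absorption stage altogether and thus only requires a clean analysis of the iterative extraction step.
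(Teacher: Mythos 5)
The statement you are addressing is Theorem \ref{Kelly}, which is the K\"uhn--Osthus theorem: this paper does not prove it, but quotes it from \cite{KO}, and the only piece of that proof used here is Theorem \ref{thm: Kuhn-Osthus Absorbing} (the existence of $\delta$-absorbers, i.e.\ the robust decomposition machinery), taken as a black box. Your proposal reproduces, in outline, the original strategy of \cite{KO}: deduce robust outexpansion from the semi-degree condition, extract $(1-o(1))r$ edge-disjoint Hamilton cycles iteratively, and set aside in advance an absorbing structure that swallows the sparse regular leftover. The genuine gap is that the last ingredient is precisely the substance of the theorem. Constructing ``absorbing skeletons'' in a sparse random slice $G_R$ that can absorb an \emph{arbitrary} sparse regular leftover $L$ --- what \cite{KO} call the robust decomposition lemma --- is the heart of their long, regularity-based argument, and your sketch does not construct them nor state a verifiable absorbing property they should satisfy; you yourself flag this step as the ``main obstacle'' and the ``delicate part'', which names the difficulty rather than resolving it. The robust-outexpansion verification and the approximate decomposition were already known (\cite{KOT}) and are comparatively routine, so without the absorbing lemma the proposal does not yield a full Hamilton decomposition.

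For the approximate part, note also that your route (repeatedly finding a Hamilton cycle in a robust outexpander and checking that the remainder stays a nearly regular robust outexpander) differs from the one taken in this paper, which avoids robust expansion and iteration altogether: here one randomly partitions the vertex set, uses Tillson's Hamilton-path decomposition of the complete digraph to split the graph into almost-regular bipartite pieces, extracts many edge-disjoint large matchings via Gale--Ryser (Theorem \ref{gale}) and Egorychev--Falikman (Theorem \ref{VDW}) together with Br\'egman's bound, assembles these matchings into path covers, and closes the path covers into Hamilton cycles using Hamilton-connectedness of oriented graphs with semi-degree above $3n/8$ (Theorem \ref{thm:Hamilton connectivity}). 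That approach has the added benefit of counting the approximate decompositions, which is what the paper needs; but for the statement as given, the missing absorbing lemma is the decisive gap in your argument.
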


\noindent
The bound on $r$ in this theorem is best possible up to the additive term of $\epsilon n$. Indeed, as we already mentioned above, if $r$  is smaller than $3n/8$ then $G$ may not even be Hamiltonian.

Counting various combinatorial objects has a long history in Discrete Mathematics and such problems have been extensively studied.  
Motivated by Theorem \ref{Kelly}, in this paper we consider the number of distinct Hamilton decompositions of dense regular oriented graphs. One can obtain an upper bound for this question by using the famous Minc conjecture, established by Br\'egman \cite{Bregman}, which provides an upper-bound on the permanent of a matrix $A$. Let $S_n$ be the set of all permutations of the set $[n]$. The permanent of an $n\times n$ matrix $A$ is defined as $\per(A)=\sum_{\sigma\in S_n}\prod_{i=1}^nA_{i\sigma(i)}$. Note that every permutation $\sigma\in S_n$ has a cycle representation which is unique up to the order of cycles. When $A$ is a $0-1$ adjacency matrix of an oriented graph (that is $A_{ij}=1$ iff $\vec{ij}\in E(G)$), every non-zero summand in the permanent is $1$ and it corresponds to a collection of disjoint cycles covering all the vertices. Hence, the permanent counts the number of such cycle factors and, in particular,  gives un upper bound on the number of Hamilton cycles in the corresponding graph. For an $r$-regular oriented graph $G$ with adjacency  matrix $A$, Br\'egman's Theorem asserts that
$$\per(A)\leq (r!)^{n/r}=(1-o(1))^n(r/e)^n.$$

Therefore, $G$ has at most $(1-o(1))^n(r/e)^n$ Hamilton cycles. Note that upon removing the edges of such a cycle from $G$, we are left with an $(r-1)$-regular oriented graph $G'$. Again by Br\'egman's Theorem, $G'$ contains at most $(1-o(1))^n((r-1)/e)^n$ distinct Hamilton cycles. Repeating this process and taking the product of all these estimates, we deduce that $G$ has at most $$\left((1+o(1))\frac{r}{e^2}\right)^{rn}$$ Hamilton decompositions. When $r$ is linear in $n$ this behaves asymptotically as $n^{(1-o(1))rn}$.

Our  first result  gives a corresponding  lower  bound,  which  together  with  the  above estimates determine asymptotically the number of Hamiltonian decompositions of dense regular oriented graphs. It is worth drawing attention to the fact that our result shows that all such graphs have roughly the same number of Hamilton decompositions.
\begin{thm}
\label{main2}
  Let $c>3/8$ be a fixed constant, let $\epsilon>0$ be an arbitrary small constant, and let $n$ be a sufficiently large integer. Then, every $cn$-regular oriented graph $G$ on $n$ vertices contains at least
  $n^{(1-\epsilon)cn^2}$
  distinct Hamilton decompositions.
\end{thm}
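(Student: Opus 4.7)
My strategy is to count \emph{ordered} Hamilton decompositions greedily, one cycle at a time, and then divide by $r!$ to obtain the count $D(G)$ of unordered decompositions. Suppose that at every step $i=1,\ldots,r$, whatever the previously chosen edge-disjoint Hamilton cycles $H_{1},\ldots,H_{i-1}$ in $G$, the residual $(r-i+1)$-regular graph $G_{i-1}:=G\setminus(H_{1}\cup\cdots\cup H_{i-1})$ contains at least $\bigl((1-o(1))(r-i+1)/e\bigr)^{n}$ Hamilton cycles. Multiplying across $i=1,\ldots,r$ gives at least $\prod_{k=1}^{r}(k/e)^{n}(1-o(1))^{n}=(r!)^{n}e^{-rn}(1-o(1))^{rn}$ ordered decompositions, so $D(G)\geq(r!)^{n-1}e^{-rn}(1-o(1))^{rn}$, which by Stirling equals $n^{(1-o(1))cn^{2}}$---matching the claimed bound for any fixed $\epsilon>0$ once $n$ is large enough.

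\smallskip
The heart of the argument is therefore a sharp lower bound on the number of Hamilton cycles in a pseudorandom $r$-regular oriented graph, matching the Br\'egman-type upper bound $\per(A(G))\leq(1+o(1))^{n}(r/e)^{n}$ up to lower order factors. I would establish it in two pieces. First, the Egorychev--Falikman theorem (van der Waerden's permanent conjecture), applied to the doubly stochastic matrix $A(G)/r$, yields $\per(A(G))\geq r^{n}\,n!/n^{n}=(1-o(1))^{n}(r/e)^{n}$; this counts the $1$-factors of $G$, i.e.\ spanning subgraphs in which every vertex has in-degree and out-degree $1$. Second, one argues that for sufficiently pseudorandom $G$ a uniformly random $1$-factor of $G$ is a single Hamilton cycle with probability at least $(1-o(1))/n$---paralleling the classical fact that a uniform permutation of $[n]$ is an $n$-cycle with probability exactly $1/n$---by showing that the cycle-length distribution of a random $1$-factor of $G$ is close to that of a uniform element of $S_{n}$. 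The resulting $1/n$ loss contributes only $O(r\log n)=o(cn^{2}\log n)$ to the log-count and is harmless.

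\smallskip
The chief obstacle is maintaining the pseudorandomness hypothesis of this main lemma throughout the greedy process: once $r-i$ drops below $3n/8$, Theorem~\ref{Kelly} is unavailable and pseudorandomness of $G_{i}$ cannot be taken for granted. My plan is to make the greedy choices \emph{randomly}---pick $H_{i}$ uniformly from the Hamilton cycles of $G_{i-1}$ at each step---and to use martingale/Azuma-type concentration to show that with high probability $G_{i}$ inherits quasi-regularity, small discrepancy, and good spectral expansion from $G$, all the way down to residual degree $r-i=\omega(1)$; this ensures the main lemma continues to apply at each step. The final $o(n)$ cycles, where even the pseudorandom bound fails, can be dealt with by a reservation trick---laying aside a small random spanning subgraph $R\subset E(G)$ at the outset and invoking Theorem~\ref{Kelly} to decompose $R$ together with the terminal leftover---or by a direct appeal to Hamiltonicity of sparse pseudorandom oriented graphs. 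Combining the per-step counts and dividing by $r!$ then yields $D(G)\geq n^{(1-\epsilon)cn^{2}}$, as required.
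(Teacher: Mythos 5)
There is a genuine gap, in fact two. The heart of your argument is the claim that, at every step, the residual $(r-i)$-regular graph has at least $\bigl((1-o(1))(r-i)/e\bigr)^n$ Hamilton cycles, deduced from the permanent bound (Theorem \ref{VDW}) plus the assertion that a uniformly random $1$-factor of a "sufficiently pseudorandom" oriented graph is a single Hamilton cycle with probability about $1/n$. Neither half of this is available. The theorem is about \emph{every} $cn$-regular oriented graph with $c$ possibly just above $3/8$; such graphs need not be pseudorandom in any spectral or discrepancy sense, and even for genuinely pseudorandom graphs the statement that the cycle-type of a random $1$-factor mimics a uniform permutation is itself a hard theorem (of Cuckler type), not a routine reduction -- it is essentially the content one has to prove, and you give no argument for it. Worse, your maintenance step is unsupported: the $i$-th cycle is drawn from a complicated, graph-dependent distribution (a uniform Hamilton cycle of $G_{i-1}$), whose edges are heavily correlated, and no Azuma-type argument is offered that survives $\Theta(n)$ such adaptive deletions; once the residual degree drops below $3n/8$ there is no longer any structural guarantee that $G_i$ is even Hamiltonian, so the whole greedy count below that threshold rests on an unproved quasirandomness-preservation claim. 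The paper avoids exactly this: it never counts Hamilton cycles of the residual graph directly, but instead counts edge-disjoint \emph{path covers} via permanent estimates in almost-regular bipartite pieces (Lemma \ref{lem: path cover lemma}), and completes them into Hamilton cycles through small vertex reservoirs whose semidegree stays above $3|W|/8$ because only few cycles pass through each reservoir (Lemma \ref{lem: completion of many hamilton cycles}).

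The endgame is also broken as stated. Theorem \ref{Kelly} applies only to regular oriented graphs of degree at least $(3/8+\epsilon)n$, so your reserved subgraph $R$ would need linear degree around $3n/8$ and, moreover, $R$ together with the terminal leftover would have to be exactly regular -- a random reservation gives neither. But if $R$ does eat degree $\approx 3n/8$, the greedy phase only produces about $(c-3/8)n$ cycles and your count collapses to $n^{(1-o(1))(c-3/8)n^2}$, far below $n^{(1-\epsilon)cn^2}$; the freedom in decomposing $R\cup{}$leftover cannot recover this, since you cannot lower-bound the number of those decompositions. What is actually needed is an absorber of degree only $\epsilon n$ that swallows an arbitrary low-degree regular remainder, i.e.\ Theorem \ref{thm: Kuhn-Osthus Absorbing}, together with a scheme ensuring the counted partial decompositions leave a \emph{regular} sparse remainder -- which is precisely how the paper's proof is organized (absorber first, then many almost-decompositions of the rest, then absorption, then a division by the $\binom{d}{t'}\leq 2^n$ multiplicity). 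As it stands, your proposal replaces the two hardest steps by unproven claims and uses Theorem \ref{Kelly} where it cannot apply.
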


\noindent
The main step in the proof of this theorem is to construct many almost Hamilton decompositions, each of which can be further completed to a full decomposition. This is done by extending some ideas from \cite{FKL} and differs from the approach used in \cite{KO}. In particular, we obtain a new and much simpler proof for the approximate version of Kelly's conjecture, originally established by K\"uhn, Osthus and Treglown in \cite{KOT}. Furthermore, note that a Hamilton decomposition of a regular tournament also gives a Hamilton decomposition of the underlying complete (undirected) graph.
Therefore Theorem \ref{main2} implies that, for odd $n$, the $n$-vertex complete graph has $n^{(1-o(1))n^2/2}$ Hamilton decompositions. This estimate, together with more general results concerning counting Hamilton decompositions of various dense regular graphs, was recently obtained in \cite{GLS}.

Another natural problem studied in this paper concerns how many edge-disjoint Hamilton cycles one can find in a given (not necessarily regular) oriented graph. Observe that if an oriented graph $G$ contains $r$ edge-disjoint Hamilton cycles, then their union gives a spanning, $r$-regular subgraph of $G$. We refer to such a subgraph as an $r$-\emph{factor} of $G$. Given an oriented graph $G$, let $\reg(G)$ be the maximal integer $r$ for which $G$ contains an $r$-factor.
Clearly, $G$  contain at most $\reg(G)$ edge-disjoint Hamilton cycles. We propose the following conjecture which, if true, is best possible. 

\begin{conjecture}\label{conj}
  Let $c>3/8$ be a fixed constant and let $n$ be sufficiently large. Let $G$ be an oriented graph on $n$ vertices with $\delta^0(G)\geq cn$. Then, $G$ contains $\reg(G)$ edge-disjoint Hamilton cycles.
\end{conjecture}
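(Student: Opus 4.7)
The natural strategy is to reduce to Theorem~\ref{Kelly}. Set $r := \reg(G)$ and let $H$ be an $r$-factor of $G$, i.e.\ an $r$-regular spanning oriented subgraph (which exists by the very definition of $\reg(G)$). Any Hamilton decomposition of $H$ immediately yields $r = \reg(G)$ edge-disjoint Hamilton cycles of $G$, which by definition is best possible. Since Theorem~\ref{Kelly} applies whenever the regularity degree is at least $3n/8 + \epsilon' n$ for some fixed $\epsilon' > 0$, it suffices to establish the near-regularization bound $\reg(G) \geq cn - o(n)$ and then feed $H$ into Theorem~\ref{Kelly}.

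To prove the near-regularization bound, model $G$ as the bipartite graph $B$ on $V^+ \cup V^-$ where $u^+ v^- \in E(B)$ iff $\vec{uv} \in E(G)$; an $r$-factor of $G$ is precisely an $r$-regular spanning bipartite subgraph of $B$, and every degree of $B$ lies in $[cn, n]$. A max-flow / Gale--Ryser type argument, exploiting integrality of the transportation polytope, produces such an $r$-regular spanning subgraph of $B$ for every $r$ at most roughly $\delta(B) = cn$, which is well above $(3/8 + \epsilon')n$ for a suitable $\epsilon' > 0$. Feeding the resulting $H$ into Theorem~\ref{Kelly} finishes the proof; alternatively, one may run the approximate-decomposition plus absorption machinery used to prove Theorem~\ref{main2} directly on $H$ to obtain the decomposition, since $H$ is $r$-regular with $r > (3/8 + \epsilon')n$.

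The main obstacle is to make the near-regularization step fully rigorous and to control any parity or feasibility obstructions to finding an exact $r$-factor at the threshold value. A robust alternative that sidesteps these subtleties is to work on $G$ directly: iteratively extract Hamilton cycles from $G$ while maintaining approximate regularity of the leftover, and reserve a small absorbing structure at the outset to accommodate the remaining imbalances when completing the last $o(n)$ cycles. This hybridizes the approach of the present paper with the statement of the conjecture; once approximate regularity is under control, no further obstacle appears beyond invoking techniques already developed for Theorems~\ref{Kelly} and~\ref{main2}.
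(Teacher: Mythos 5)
There is a fundamental gap: the statement you are asked about is stated in the paper as an open \emph{conjecture}, and the paper does not prove it --- it only proves the approximate version (Theorem~\ref{main}, giving $(1-\varepsilon)\reg(G)$ edge-disjoint Hamilton cycles). Your argument does not close this gap. The key unjustified step is the ``near-regularization bound'' $\reg(G)\geq cn-o(n)$. The hypothesis $\delta^0(G)\geq cn$ does \emph{not} imply this: in the bipartite model $B$ the Gale--Ryser condition requires $e_G(S\to T)\geq r(|S|+|T|-n)$ for \emph{all} pairs of sets, and a minimum semi-degree of $cn$ with $c$ only slightly above $3/8$ gives no useful lower bound on $e_G(S\to T)$ when $|S|,|T|\approx n/2$; one can easily construct such $G$ in which $\reg(G)$ is far below $\delta^0(G)$. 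Indeed, if $\reg(G)$ were automatically close to $cn$, the conjecture would collapse to a statement about near-regular graphs and would not have been worth stating; the remark in the paper that the conjecture, if true, is best possible reflects exactly the fact that $\reg(G)$ can be much smaller than $cn$.

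The same problem undermines the reduction to Theorem~\ref{Kelly}: an $r$-factor $H$ with $r=\reg(G)$ need not satisfy $r\geq (3/8+\epsilon')n$ (the paper only guarantees $\reg(G)\geq (c-3/8)n$ via Theorem~\ref{thm:exact semidegree condition}), and a sparse $r$-regular oriented graph need not admit a Hamilton decomposition --- it need not even be connected. The whole difficulty of the conjecture is that the desired Hamilton cycles must be found in $G$ itself, using edges outside any fixed $r$-factor, when the factor alone is too sparse to decompose. Your fallback (``iteratively extract Hamilton cycles while maintaining approximate regularity, plus an absorber'') is precisely the method of Theorems~\ref{main} and~\ref{main2} in the paper, and it inherently loses an $\varepsilon\reg(G)$ fraction of cycles: the absorber of Theorem~\ref{thm: Kuhn-Osthus Absorbing} is itself a regular subgraph of linear degree whose edges must be set aside, and completing the final $o(n)$ cycles exactly is the open content of the conjecture, not a routine application of existing machinery.
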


Our second result gives supporting evidence for this conjecture, proving that such oriented graphs $G$ contain $(1-o(1))\reg(G)$ edge-disjoint Hamilton cycles.

\begin{thm}
  \label{main}
Let $c>3/8$ and $\varepsilon>0$ be fixed constants and let $n$ be sufficiently large. Let $G$ be an oriented graph on $n$ vertices with $\delta^0(G)\geq cn$. Then, $G$ contains a collection of $(1-\varepsilon)\reg(G)$ edge-disjoint Hamilton cycles.
\end{thm}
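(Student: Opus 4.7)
The plan is to reduce Theorem~\ref{main} to the approximate version of Kelly's conjecture (which we establish as a byproduct of our proof of Theorem~\ref{main2}), applied to a near-maximal regular spanning subgraph of $G$. By the definition of $\reg(G)$, there exists an $r$-regular spanning subgraph $H\subseteq G$ with $r:=\reg(G)$, so it suffices to exhibit $(1-\varepsilon)r$ edge-disjoint Hamilton cycles in $H$.

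To apply the approximate Kelly's conjecture to $H$, I need $r\ge 3n/8+\varepsilon' n$ for some $\varepsilon'>0$ depending on $c-3/8$. I would establish this via a max-flow argument on the bipartite double cover $B$ of $G$: its vertex classes $V^+, V^-$ are two disjoint copies of $V(G)$, and its edge set is $\{u^+v^- : u\to v\in E(G)\}$. Then $\delta(B)=\delta^0(G)\ge cn$, and $r$-regular spanning subgraphs of $G$ are in bijection with $r$-regular spanning subgraphs of $B$ (the degree of $u^+$ records the out-degree of $u$, and the degree of $v^-$ records the in-degree of $v$). A Hall-type / flow argument applied to $B$ — using both the density $c>3/8$ and the antisymmetric structure of the bipartite double cover of an oriented graph (no two edges $u^+v^-$ and $v^+u^-$ simultaneously) — yields an $r$-factor of $B$ for $r\ge cn-o(n)$, whence $\reg(G)\ge cn-o(n)>3n/8+\varepsilon' n$. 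With this lower bound in hand, the approximate Kelly's conjecture applied to the $r$-regular oriented graph $H$ produces the required $(1-\varepsilon)r=(1-\varepsilon)\reg(G)$ edge-disjoint Hamilton cycles.

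The main obstacle is the lower bound $\reg(G)\ge cn-o(n)$. For oriented graphs we always have $c\le 1/2$ (since $d^+(v)+d^-(v)\le n-1$), so the bipartite cover $B$ lies in the regime $c\le 1/2$, where general balanced bipartite graphs of minimum degree $cn$ need not contain a perfect matching — let alone a near-maximal $r$-factor. The argument must therefore leverage the antisymmetric structure of $B$ (together with the density hypothesis $c>3/8$) to rule out the pathological configurations that would violate the Hall / flow condition. Once this is done, the reduction to approximate Kelly is routine, and the cost of passing through the approximate (rather than exact) version of Kelly's conjecture is precisely the $(1-\varepsilon)$ factor in the conclusion; obtaining the exact $\reg(G)$ bound predicted by Conjecture~\ref{conj} would require replacing the approximate decomposition step with an exact one.
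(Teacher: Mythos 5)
Your reduction hinges on the claim that $\reg(G)\ge cn-o(n)$, and this claim is false; no Hall/flow argument on the bipartite double cover can deliver it, because the Gale--Ryser condition genuinely fails at that level for some oriented graphs with $\delta^0(G)\ge cn$. Concretely, take a H\"aggkvist-type construction: partition $V$ into $P,W,Q,Z$ with $|P|=|Q|=(1/4-2\varepsilon)n$, $|W|=(1/4+\varepsilon)n$, $|Z|=(1/4+3\varepsilon)n$; put regular tournaments inside $P,Q,W$, orient all edges $P\to W$, $W\to Q$, $Q\to Z$, $Z\to P$, orient all $Z$--$W$ edges so that each $z\in Z$ has about $(1/8)n$ out- and $(1/8)n$ in-neighbours in $W$, and inside $Z$ place only a sparse oriented graph of in/out-degree about $1.5\varepsilon n$. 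One checks that $\delta^0(G)\ge(3/8+\gamma)n$ for a small $\gamma>0$ after minor perturbations, yet for $X=P\cup Z$, $Y=Q\cup Z$ we have $|X|+|Y|-n=2\varepsilon n$ while every edge of $G$ from $X$ to $Y$ lies inside $Z$, so $e_G(X\to Y)\approx \tfrac{3\varepsilon}{8}n^2$. By Gale--Ryser (applied exactly as in your bipartite double cover, or Theorem \ref{gale}) any $r$-factor needs $e_G(X\to Y)\ge 2\varepsilon rn$, forcing $\reg(G)\lesssim \tfrac{3}{16}n$, far below $cn$ and even below the Hamiltonicity threshold $3n/8$. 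The antisymmetry of the double cover does not rescue the argument: the graph above is an oriented graph. (This is also why the paper phrases Conjecture \ref{conj} and Theorem \ref{main} in terms of $\reg(G)$ rather than $\delta^0(G)$, and why its proof only uses the trivial bound $\reg(G)\ge(c-3/8)n$.)

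Because of this, the whole strategy of passing to the $\reg(G)$-regular spanning subgraph $H$ and applying the approximate version of Kelly's conjecture to $H$ alone cannot be repaired: since $\reg(G)$ may be well below $3n/8$, the graph $H$ is too sparse for any of the dense Hamiltonicity machinery and need not even contain a single Hamilton cycle, let alone $(1-\varepsilon)\reg(G)$ edge-disjoint ones. The point the paper's proof exploits, and which your plan misses, is that the Hamilton cycles are not required to live inside an $r$-factor: one takes a $\reg(G)$-factor $D$, splits $G$ via Lemma \ref{partition lemma} into pieces $H_i=D_i\cup E_i\cup F_i$, extracts many edge-disjoint path covers from the (possibly sparse) factor pieces $D_i$ (Lemma \ref{lem: path cover lemma}), and then completes them into Hamilton cycles using reserved dense vertex sets $W_i$ of the ambient graph $G$, whose induced subgraphs inherit $\delta^0\ge(c-\varepsilon)|W_i|$ and hence Hamilton connectivity (Lemmas \ref{lem: completion of path covers to Ham cycles} and \ref{lem: completion of many hamilton cycles}). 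So the dense host $G$ must stay in play throughout; restricting to $H$ discards exactly the edges that make the completion possible. (A minor additional concern is the near-circularity of quoting the approximate Kelly conjecture ``as a byproduct of Theorem \ref{main2}'' inside a proof of Theorem \ref{main}, but that could be fixed by citing \cite{KOT}; the fatal issue is the false lower bound on $\reg(G)$.)
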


\noindent
This theorem follows immediately from our proof of Theorem \ref{main2}. For a regular tournament Theorem \ref{main} implies an approximate version of Kelly's Conjecture from \cite{KOT}. \vspace{1.5mm}

{\bf Notation:}
Given an oriented graph $G$ and a vertex $v\in V(G)$, we let $d_G^+(v)$ and $d_G^-(v)$ to denote the out- and in-degree of $v$, respectively. We omit the subscript $G$ whenever there is no chance of confusion. We also define $\delta^+(G):=\min_v d^+(v)$, $\delta^-:=\min_vd^-(v)$, $\Delta^+(G):=\max_vd^+(v)$, $\Delta^-(G):=\max_v d^-(v)$, and set $\delta^0(G)=\min\{\delta^+(G),\delta^-(G)\}$ and $\Delta^0(G)=\max\{\Delta^+(G),\Delta^-(G)\}$. We also write $a\pm b$ to denote a value which lies in the interval $[a-b,a+b]$.

\section{Tools}

In this section we have collected a number of tools to be used in the proofs of our results.

\subsection{Chernoff's inequality}
	
	Throughout the paper we will make extensive use of the following well-known bound on the
	upper and lower tails of the Binomial distribution, due to Chernoff
	(see Appendix A in \cite{AlonSpencer}).

		\begin{lem}[Chernoff’s inequality]
		Let $X \sim Bin(n, p)$ and let
		${\mathbb E}(X) = \mu$. Then
			\begin{itemize}
				\item
				${\mathbb P}[X < (1 - a)\mu ] < e^{-a^2\mu /2}$
				for every $a > 0$;
				\item ${\mathbb P}[X > (1 + a)\mu ] <
				e^{-a^2\mu /3}$ for every $0 < a < 3/2$.
			\end{itemize}
		\end{lem}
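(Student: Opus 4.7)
The plan is to use the classical Chernoff moment generating function (MGF) method, handling the upper and lower tails in parallel. Since $X = \sum_{i=1}^n X_i$ with $X_i$ i.i.d.\ Bernoulli$(p)$, independence gives $\mathbb{E}[e^{tX}] = (1-p+pe^t)^n$ for any $t\in\mathbb{R}$, and the inequality $1+x\leq e^x$ applied to $x=p(e^t-1)$ upgrades this to $\mathbb{E}[e^{tX}] \leq e^{\mu(e^t-1)}$. This single MGF estimate feeds both tails; everything else is Markov's inequality and an optimization over a scalar parameter $t$.

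For the upper tail, I would apply Markov's inequality with $t>0$:
\[
\mathbb{P}[X > (1+a)\mu] \leq e^{-t(1+a)\mu}\,\mathbb{E}[e^{tX}] \leq \exp\!\bigl(\mu(e^t-1) - t(1+a)\mu\bigr),
\]
and optimize with $t=\ln(1+a)$ to obtain the ``tight'' bound $\bigl(e^a/(1+a)^{1+a}\bigr)^\mu$. Converting this to the clean form $e^{-a^2\mu/3}$ reduces to the scalar inequality $(1+a)\ln(1+a) - a \geq a^2/3$ for $0<a<3/2$, which I would verify by a short calculus check: setting $f(a) = (1+a)\ln(1+a) - a - a^2/3$, note $f(0)=f'(0)=0$, observe that $f''(a) = \tfrac{1}{1+a} - \tfrac{2}{3}$ changes sign once at $a=1/2$, and confirm nonnegativity at the right endpoint $a=3/2$ by direct evaluation. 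For the lower tail, the same argument applied to $-X$ with $t>0$ yields
\[
\mathbb{P}[X < (1-a)\mu] \leq \exp\!\bigl(\mu(e^{-t}-1) + t(1-a)\mu\bigr);
\]
the optimum $t=-\ln(1-a)$ gives $\bigl(e^{-a}/(1-a)^{1-a}\bigr)^\mu$, and the clean bound $e^{-a^2\mu/2}$ reduces to $(1-a)\ln(1-a)+a \geq a^2/2$ on $(0,1)$. This last inequality I would prove cleanly by letting $g(a) = (1-a)\ln(1-a)+a - a^2/2$: then $g(0)=0$, $g'(a) = -\ln(1-a)-a$ with $g'(0)=0$, and $g''(a) = a/(1-a) \geq 0$ on $(0,1)$, so $g'$ and hence $g$ are nonnegative by monotonicity.

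There is no genuine obstacle beyond bookkeeping, but it is worth noticing why the two exponents differ: the lower tail admits the tighter constant $1/2$ without any upper restriction on $a$ (beyond $a<1$, forced by the support of $X$), whereas the upper tail only tolerates the constant $1/3$ inside the bounded window $0<a<3/2$. This asymmetry is intrinsic and reflects the fact that $(1-a)\ln(1-a)+a$ has a cleaner quadratic expansion near zero than $(1+a)\ln(1+a)-a$; pushing $a$ past $3/2$ in the upper-tail estimate would require weakening the $1/3$ constant, which is precisely why the lemma's second bullet carries that range restriction.
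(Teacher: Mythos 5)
Your argument is correct: the paper does not prove this lemma but simply cites it (Appendix A of Alon--Spencer), and the proof there is exactly your MGF/Markov optimization with the scalar estimates $(1+a)\ln(1+a)-a\geq a^{2}/3$ on $(0,3/2)$ and $(1-a)\ln(1-a)+a\geq a^{2}/2$ on $(0,1)$, so you are reproducing the standard source's approach. The only point worth making explicit is the lower tail for $a\geq 1$, where the event $\{X<(1-a)\mu\}$ has probability zero and the bound holds trivially, as your parenthetical already indicates.
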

	
\noindent
	\begin{rem}\label{rem:hyper} These bounds also hold when $X$ is
	hypergeometrically distributed with
	mean $\mu $.
\end{rem}

\subsection{Perfect matchings in a bipartite graph}

Here we present a number of results related to  perfect matchings in bipartite graphs.
The first result is a criterion for the existence of $r$-factors in bipartite graphs, due to Gale and Ryser (see \cite{Gal}, \cite{Ry}).

\begin{thm}
  \label{gale}
Let $G=(A\cup B,E)$ be a bipartite graph with $|A|=|B|=m$, and let $r$ be an integer. Then $G$ contains an $r$-factor if and only if for all $X\subseteq A$ and $Y\subseteq B$
\begin{align*}
  e_G(X,Y)\geq r(|X|+|Y|-m).
\end{align*}
\end{thm}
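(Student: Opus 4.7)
My plan is to prove the two implications separately: necessity by direct double-counting, and sufficiency by translating the problem into a max-flow min-cut question in a natural transportation network.

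For necessity, I would start with an $r$-factor $H \subseteq G$ and fix $X \subseteq A$, $Y \subseteq B$. The edges of $H$ incident to $X$ total $r|X|$, while those landing in $B \setminus Y$ are at most $\sum_{b \in B \setminus Y} d_H(b) = r(m-|Y|)$, so $e_G(X,Y) \geq e_H(X,Y) \geq r(|X|+|Y|-m)$.

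For sufficiency the plan is to build a flow network $N$ on $\{s,t\} \cup A \cup B$ with source arcs $s \to a$ of capacity $r$ for each $a \in A$, sink arcs $b \to t$ of capacity $r$ for each $b \in B$, and each $ab \in E(G)$ oriented from $A$ to $B$ with capacity $1$. Integer max-flow min-cut then applies, and an integral flow of value $rm$ is forced to saturate every source and sink arc, so its support is precisely an $r$-factor of $G$. Thus the problem reduces to checking that every $s$-$t$ cut in $N$ has capacity at least $rm$. A cut is determined by the $s$-side sets $A_1 \subseteq A$, $B_1 \subseteq B$ and has capacity
\[ r(m-|A_1|) + r|B_1| + e_G(A_1, B\setminus B_1); \]
the substitution $X = A_1$, $Y = B \setminus B_1$ turns the inequality ``$\geq rm$'' into exactly the Gale--Ryser hypothesis.

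The only genuine moving part is the bookkeeping around this substitution; there is no serious combinatorial obstacle once the max-flow min-cut reduction is in place. If I wished to avoid flows I could instead apply the defect form of Hall's theorem to an $r$-blown-up bipartite graph (replacing each vertex by $r$ copies), but then I would need extra care to ensure that each original edge of $G$ is used at most once in the matching, so the flow route is noticeably cleaner.
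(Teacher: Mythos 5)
The paper does not prove this statement at all: it is quoted as the known Gale--Ryser criterion and attributed to the references of Gale and Ryser, so there is no internal proof to compare against. Your argument is correct and is essentially the classical one (indeed, Gale's original proof is via network flows). The necessity computation $e_G(X,Y)\geq e_H(X,Y)\geq r|X|-r(m-|Y|)$ is fine, and for sufficiency the cut bookkeeping checks out: a cut with $s$-side $A_1\cup B_1$ has capacity $r(m-|A_1|)+r|B_1|+e_G(A_1,B\setminus B_1)$, and with $X=A_1$, $Y=B\setminus B_1$ the condition that this be at least $rm$ is exactly the stated inequality, while the integrality of the max flow and the saturation of all source and sink arcs at value $rm$ do give an $r$-factor as you say.
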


Next we present Br\'egman's Theorem which provides an upper bound for the number of perfect matchings in a bipartite graph based on its degrees (see e.g. \cite{AlonSpencer} page 24).

\begin{thm}(Br\'egman's Theorem)
\label{Bregman}
Let $G=(A\cup B,E)$ be a bipartite graph with $|A| = |B|$. Then the number of perfect matchings in $G$ is at most
$$\prod_{a\in A} (d_G(a)!)^{1/d_G(a)}.$$
\end{thm}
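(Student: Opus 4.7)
My plan is to use the elegant entropy-based proof due to Radhakrishnan. Let $N$ be the number of perfect matchings of $G$ and let $M$ be a uniformly random such matching, viewed as a bijection $A \to B$. Writing $H(\cdot)$ for binary Shannon entropy, one has $\log_2 N = H(M)$, so it suffices to show
$$H(M) \le \sum_{a \in A} \frac{\log_2(d_G(a)!)}{d_G(a)}.$$

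The key step is to reveal $M$ one coordinate at a time in a random order. I would introduce an auxiliary uniform random permutation $\sigma$ of $A$, independent of $M$, and apply the chain rule for entropy to obtain
$$H(M) \;=\; H(M\mid \sigma) \;=\; \sum_{k=1}^{n} H\bigl(M(a_{\sigma(k)})\,\bigm|\, M(a_{\sigma(1)}), \ldots, M(a_{\sigma(k-1)}), \sigma\bigr).$$
Conditional on the earlier values, $M(a_{\sigma(k)})$ lies in $N(a_{\sigma(k)}) \setminus \{M(a_{\sigma(j)}) : j < k\}$; writing $T(a)$ for the size of this ``available set'' when $a$ is processed, each conditional entropy is at most $\mathbb{E}[\log_2 T(a_{\sigma(k)})]$. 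Reindexing by the vertex being processed yields $H(M) \le \sum_{a \in A} \mathbb{E}[\log_2 T(a)]$.

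The heart of the argument is then to establish $\mathbb{E}[\log_2 T(a)] \le \tfrac{1}{d_G(a)} \log_2(d_G(a)!)$ for every vertex $a$. I would fix $M$ first and observe that a neighbor $b$ of $a$ is unavailable at the moment $a$ is processed precisely when $M^{-1}(b)$ precedes $a$ in $\sigma$; hence $T(a) = d_G(a) - r + 1$, where $r$ is the rank of $a$ within the set $S := \{M^{-1}(b) : b \in N(a)\}$ under $\sigma$. Since $|S| = d_G(a)$ and $a \in S$ (because $M(a) \in N(a)$), the symmetry of the uniform random $\sigma$ forces $r$ to be uniform on $\{1, \ldots, d_G(a)\}$, so $T(a)$ is uniform on $\{1, \ldots, d_G(a)\}$ and $\mathbb{E}_\sigma[\log_2 T(a)] = \tfrac{1}{d_G(a)} \log_2(d_G(a)!)$. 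As this bound is pointwise in $M$, it persists after averaging over $M$; summing over $a$ and exponentiating gives the claim. The only subtle point is the independence of $\sigma$ and $M$, without which the symmetry argument identifying the distribution of $r$ would collapse.
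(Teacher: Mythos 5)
Your argument is correct: it is the standard entropy proof of Br\'egman's theorem due to Radhakrishnan, and all the delicate points are handled properly --- the chain rule is applied conditionally on the auxiliary permutation $\sigma$ (with $H(M)=H(M\mid\sigma)$ justified by independence), the conditional entropy at each step is bounded by the logarithm of the size of the available set, and the distributional identity that makes the computation work (for fixed $M$, the rank of $a$ within $S=\{M^{-1}(b): b\in N(a)\}$ is uniform on $\{1,\ldots,d_G(a)\}$, so $T(a)$ is uniform and $\mathbb{E}_\sigma[\log T(a)]=\log(d_G(a)!)/d_G(a)$) is stated and used correctly, pointwise in $M$ before averaging. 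Note, however, that the paper does not prove this statement at all: Theorem~\ref{Bregman} is quoted as a classical black-box tool, with a pointer to the literature (the treatment in Alon--Spencer, which presents essentially the same random-ordering argument you give, phrased there via products and convexity rather than entropy). So your contribution is a self-contained proof of a result the paper merely cites; nothing in it conflicts with how the theorem is used later (Remark~\ref{Bregman wrt max degree}). The only cosmetic caveat is the degenerate case of an isolated vertex in $A$, where the expression $(d_G(a)!)^{1/d_G(a)}$ is not defined; one either assumes all degrees positive or notes that $G$ then has no perfect matchings, so the bound is vacuous.
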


\begin{rem}
  \label{Bregman wrt max degree}
  It will be useful for us to give an upper bound with respect to the maximum degree of $G$. Suppose that $|A|=|B|=m$ and let $\Delta:=\Delta(G)$. Using Theorem \ref{Bregman} and Stirling's approximation, one obtains that the number of perfect matchings in $G$ is at most
$$(\Delta!)^{m/\Delta}\leq (8\Delta )^{m/\Delta }\left(\frac{\Delta}{e}\right)^m.$$
\end{rem}

Lastly, we require the following result which provides a lower bound for the number of perfect matchings in a regular bipartite graph. This result is known as the Van Der Waerden Conjecture, and it was proven by Egorychev \cite{Egorychev}, and independently by Falikman \cite{Falikman}.

\begin{thm} (Van Der Waerden's Conjecture)
\label{VDW}
Let $G=(A\cup B,E)$ be a $d$-regular bipartite graph with both parts of size $m$. Then the number of perfect matchings in $G$ is at least
$$d^m\frac{m!}{m^m}\geq \left(\frac{d}{e}\right)^m.$$
\end{thm}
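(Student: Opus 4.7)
The number of perfect matchings of $G$ equals the permanent $\per(M)$ of the $m \times m$ biadjacency matrix $M$ of $G$, and $d$-regularity means $B := M/d$ is doubly stochastic. Since the permanent is multi-linear in the rows, $\per(M) = d^m \per(B)$, so the claimed inequality reduces to the van der Waerden statement $\per(B) \geq m!/m^m$ for every doubly stochastic $m \times m$ matrix $B$. The weaker tail bound $(d/e)^m$ then follows from Stirling's estimate $m! \geq (m/e)^m$.

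I would prove the van der Waerden inequality by showing that on the Birkhoff polytope $\Omega_m$ of doubly stochastic matrices, $\per$ attains its minimum precisely at the uniform matrix $J_m/m$, for which $\per(J_m/m) = m!/m^m$. By compactness, a minimizer $B^* \in \Omega_m$ exists. One first argues that $B^*$ may be taken to have full support: if $B^*_{ij} = 0$, double stochasticity produces indices $k, \ell$ with $B^*_{i\ell}, B^*_{kj} > 0$, and a rank-one perturbation swapping mass between the cells $(i,j),(k,\ell)$ and $(i,\ell),(k,j)$ keeps $B^*$ inside $\Omega_m$ while changing the permanent linearly in the perturbation parameter. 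Choosing the sign that decreases $\per$ either contradicts minimality or forces a block-diagonal structure, reducing the problem to smaller $m$ by induction.

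When $B^*$ lies in the interior, the Lagrange multiplier conditions for the constrained minimization yield that the $(i,j)$-cofactor $C_{ij}(B^*)$ (the permanent of the minor obtained by deleting row $i$ and column $j$) satisfies $C_{ij}(B^*) = u_i + v_j$ for some reals $u_i, v_j$. The heart of the argument is then Alexandrov's inequality for permanents: for any $m-2$ nonnegative columns $c_3, \ldots, c_m$ and any nonnegative columns $b, c$,
\[
\per(b, c, c_3, \ldots, c_m)^2 \;\geq\; \per(b, b, c_3, \ldots, c_m)\cdot \per(c, c, c_3, \ldots, c_m),
\]
with equality only when $b$ and $c$ are proportional. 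Applied to pairs of columns of $B^*$ together with the Lagrange relation, this inequality forces all columns of $B^*$ to be pairwise proportional; combined with the column-sum constraint this gives $B^* = J_m/m$ as required.

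The principal obstacle is Alexandrov's inequality itself, a genuinely non-trivial concavity-type estimate in the same family as the Alexandrov--Fenchel inequality for mixed volumes (equivalently, a hyperbolic inequality for mixed discriminants of rank-one positive semidefinite matrices). Its proof proceeds by an induction on $m$ that simultaneously tracks the equality case; once it is in hand, the remainder of the argument is essentially a convex-optimization exercise. A modern alternative, which I might use instead, replaces the Alexandrov step by Gurvits' capacity argument: one shows that the coefficient of $x_1 \cdots x_m$ in the stable polynomial $p_B(x) := \prod_{i=1}^m \sum_j B_{ij} x_j$ is at least $(m!/m^m)\cdot \mathrm{Cap}(p_B)$, and that $\mathrm{Cap}(p_B) \geq 1$ for doubly stochastic $B$ by AM--GM.
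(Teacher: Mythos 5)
Your reduction is correct and matches how the paper uses this statement: perfect matchings of $G$ are counted by $\per(M)$ for the biadjacency matrix $M$, $d$-regularity makes $M/d$ doubly stochastic, multilinearity gives $\per(M)=d^m\per(M/d)$, and Stirling turns $m!/m^m$ into $e^{-m}$. The key point of comparison, however, is that the paper does not prove this theorem at all: it is quoted as a black box, attributed to Egorychev and Falikman, exactly because the doubly stochastic bound $\per(B)\ge m!/m^m$ is a deep result. What you outline is essentially Egorychev's original argument (minimize $\per$ over the Birkhoff polytope, handle zero entries by a swap perturbation and induction, use the Lagrange condition $\per B^*(i|j)=u_i+v_j$ at an interior minimizer, and conclude via the Alexandrov permanent inequality), with Gurvits' capacity proof offered as a modern alternative; both are legitimate routes, and the Gurvits route is arguably the cleaner one to write out in full. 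But as it stands your text is a proof plan rather than a proof: the entire difficulty is concentrated in Alexandrov's inequality (with its equality case) or, respectively, in Gurvits' theorem that the coefficient of $x_1\cdots x_m$ in a real stable polynomial is at least $(m!/m^m)\cdot\mathrm{Cap}$, and you only gesture at these. Also, the step ``Alexandrov forces all columns pairwise proportional'' needs the equality-case analysis combined with the Lagrange relation and the treatment of boundary minimizers (fully indecomposable reduction); this is where the published proofs spend most of their effort. Within the context of this paper, citing Egorychev--Falikman, as the authors do, is the intended solution; if you want a self-contained proof, the Gurvits capacity argument is the one I would recommend completing in detail.
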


 \subsection{Hamilton paths, cycles and absorbers}

	We make use of the following theorem of Keevash, K\"uhn and Osthus \cite{KeeKO}.

	\begin{thm}
  		\label{thm:exact semidegree condition}
	Every $n$-vertex oriented graph $G$ with $\delta ^0(G) \geq (3n-4)/8$ contains
	a Hamilton cycle, provided $n$ is sufficiently large.
	\end{thm}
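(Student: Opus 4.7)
The plan is to follow the absorption plus stability framework that has become standard for tight Hamiltonicity results in the dense setting. The approach splits into two cases depending on how close the input graph is to the extremal configuration of H\"aggkvist (which is a blow-up of an oriented graph on four vertices, with two of the parts having roughly a $1/4 \pm o(1)$ fraction of the vertices and very restricted adjacencies). Fix a small $\eta>0$ and say that $G$ is \emph{extremal} if $V(G)$ admits a partition into four parts whose sizes and cross-degrees are within $\eta n$ of the H\"aggkvist pattern, and \emph{non-extremal} otherwise.

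For the non-extremal case the plan is to use the directed version of Szemer\'edi's Regularity Lemma to obtain a reduced digraph $R$ on $k$ clusters. The minimum semi-degree condition $\delta^0(G)\geq (3n-4)/8$ transfers (up to $\varepsilon k$ error) to $R$, so $R$ has $\delta^0(R) \geq (3/8 + \eta/2)k$ after assuming we are $\eta$-far from extremal; one shows by a short stability argument that this stronger bound must in fact hold in $R$. On such $R$ one can find a Hamilton cycle $C_R$ (by the approximate version, which is much easier and follows from a degree-only argument). I would then (i) build an \emph{absorbing path} $P_{\text{abs}}$ of length $o(n)$ using the regularity/degree structure, with the property that for every small set $U$ of leftover vertices, $P_{\text{abs}}$ can be rerouted into a path on $V(P_{\text{abs}})\cup U$ with the same endpoints; (ii) use $C_R$ to cover almost all remaining vertices by long paths through the super-regular pairs produced by the blow-up lemma; (iii) glue these paths together using short connecting paths obtained from the strong connectivity of $G$, and finally close up through $P_{\text{abs}}$ which swallows the untidy leftover.

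The main technical obstacle is the \emph{extremal case}. Here the reduced graph looks essentially like the H\"aggkvist construction and the slack used in the absorbing argument above evaporates: one cannot afford to leave any vertex uncovered, and the natural connecting edges between the four parts are very restricted in direction. My plan is to exploit the rigid structure directly. Once $G$ is shown to be within $\eta n$ of H\"aggkvist's graph, one can label the four parts $V_1,V_2,V_3,V_4$ consistently with the model and show, using the exact $(3n-4)/8$ bound, that each part has size exactly $n/4 \pm O(1)$ and that the "forbidden" direction between consecutive parts has at most $O(\eta n^2)$ edges. Then I would construct the Hamilton cycle greedily by zigzagging $V_1\to V_2\to V_3\to V_4\to V_1\to \ldots$, using a Hall-type argument (which is where Theorem \ref{gale} becomes useful) to guarantee that a perfect system of transitions exists. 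The bad vertices (those violating the extremal pattern) are absorbed individually using local swaps; here the tight $(3n-4)/8$ bound is essential because it leaves no room for any exceptional vertex to be un-absorbable.

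The most delicate step, and the one I expect to be the main obstacle, is turning a soft "approximately H\"aggkvist" structure into the rigid labeling needed for the extremal argument: one has to rule out intermediate configurations that look extremal in more than one way, and show that the exceptional vertices are few enough (polynomially many, not $\Omega(n)$) to be handled by local adjustments. Everything else---regularity, absorbers, connecting---is by now routine machinery, but the stability analysis that feeds these two cases requires careful bookkeeping of the semi-degree slack.
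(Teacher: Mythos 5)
This statement is not proved in the paper at all: it is quoted as an external black box, namely the exact semi-degree theorem of Keevash, K\"uhn and Osthus \cite{KeeKO}, so the only ``paper proof'' to compare against is a citation. Your proposal is therefore being measured against a result whose published proof is itself a long and technically heavy paper, and judged on that scale it is a strategy outline rather than a proof. The two places where all the genuine work lies are precisely the places you defer: (i) the stability step, where you claim that being $\eta$-far from the H\"aggkvist configuration yields $\delta^0(R)\geq (3/8+\eta/2)k$ in the reduced digraph ``by a short stability argument'' --- this is not short, and setting up the extremal dichotomy so that this implication is even true is a substantial part of any proof of the exact bound; and (ii) the extremal case, which you acknowledge is ``the main obstacle'' and sketch only as ``zigzag through $V_1\to V_2\to V_3\to V_4$ with a Hall-type argument plus local swaps.'' That sketch does not engage with the actual difficulty: in the near-extremal configuration the number of usable transition edges between classes is tight to within an additive constant (this is exactly where the $-4$ in $(3n-4)/8$ matters), exceptional vertices may have almost all of their in- or out-neighbourhood in the ``wrong'' classes, and one must also rule out graphs that are simultaneously close to more than one labelling of the extremal pattern. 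Invoking Theorem \ref{gale} here is speculative; nothing in your outline shows that the required system of transitions exists when the degree condition is met with equality.

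Beyond the deferred steps, the outline also does not match how the exact theorem is actually established: the argument of \cite{KeeKO} builds on the approximate Hamiltonicity result of Kelly, K\"uhn and Osthus \cite{KKO} together with a careful expansion/shifted-walk analysis and a long extremal case analysis, rather than the generic ``absorbing path plus blow-up lemma'' recipe, and no known short argument reduces the exact bound to routine machinery. So there is a genuine gap: what you have written is a plausible research plan whose hard content (the exact extremal analysis and the stability bookkeeping) is exactly what remains to be done. For the purposes of this paper the correct move is simply to cite \cite{KeeKO}, as the authors do.
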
	
		
	We also make use of the following related result of Kelly, K\"uhn and Osthus,
	which follows immediately from the proof of the main theorem in \cite{KKO}.

	\begin{thm}
  		\label{thm:Hamilton connectivity}
	Let $c>3/8$ be a constant and $n$ be sufficiently large. Suppose that $G$ is an 	
	oriented graph on $n$ vertices with $\delta^0(G)\geq cn$, and let $x,y\in V(G)$ be
	any two distinct vertices. Then there is a Hamilton path in $G$ with $x$ as its
	 starting point and $y$ as its final point.
	\end{thm}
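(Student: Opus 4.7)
The plan is to adapt the absorbing-method proof of Theorem~\ref{thm:exact semidegree condition} from \cite{KKO}, modifying the construction so that the final spanning structure is a directed path with the prescribed endpoints $x$ and $y$ instead of a cycle. A convenient first ingredient is a \emph{short-path lemma}: for $c>3/8$, any two distinct vertices $u,v\in V(G)$ can be joined by a directed path of bounded length even after deleting a set $W\subseteq V(G)\setminus\{u,v\}$ with $|W|\leq n/100$. Indeed $|N^+(u)\setminus W|,\,|N^-(v)\setminus W|\geq cn-n/100>n/3$, and a one- or two-step neighborhood-expansion argument (using $c>3/8$) supplies a $u$-$v$ path of length at most $3$ in $G-W$.

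Next, following the \cite{KKO} blueprint, I would build an \emph{absorbing path} $P_{\rm abs}$ of length $o(n)$ inside $G\setminus\{x,y\}$. For each vertex $w\in V(G)\setminus\{x,y\}$, standard double counting under the semi-degree hypothesis produces many constant-size $w$-absorbers: short oriented paths with fixed endpoints that admit two rearrangements, one using $w$ internally and one avoiding $w$. A random selection of these absorbers, spliced together via the short-path lemma, yields a single path $P_{\rm abs}$ from some vertex $s$ to some vertex $t$, with $x,y\notin V(P_{\rm abs})$, enjoying the property that for every set $R\subseteq V(G)\setminus V(P_{\rm abs})$ with $|R|\leq\alpha n$ there is an $s$-$t$ path in $G[V(P_{\rm abs})\cup R]$ covering all of $V(P_{\rm abs})\cup R$.

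To finish, I work inside the leftover graph $G':=G-(V(P_{\rm abs})\setminus\{s,t\})$, which still satisfies $\delta^0(G')\geq(c-o(1))n$. The task is to find two internally disjoint directed paths, from $x$ to $s$ and from $t$ to $y$, that together cover all but a leftover set $R$ of size at most $\alpha n$. A clean realisation is to contract $\{x,s\}$ to a single vertex and $\{t,y\}$ to another, and then apply a minor variant of Theorem~\ref{thm:exact semidegree condition} to obtain an almost-Hamilton cycle in the contracted oriented graph. Uncontracting and then absorbing $R$ into $P_{\rm abs}$ using the property from the previous paragraph delivers the desired Hamilton $x$-$y$ path in $G$.

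The main obstacle is the construction of $P_{\rm abs}$ itself: one has to show that enough $w$-absorbers exist in an oriented graph whose semi-degree only just exceeds $3n/8$, and that they can be merged into a single path whose endpoints $s,t$ are compatible with short connections from $x$ and to $y$. This is precisely the delicate analysis performed in \cite{KKO}; the Hamilton-connectivity statement requires only additional bookkeeping to reserve the vertices $x,y$ and to control the endpoints of the absorbing path, which is why the result follows immediately from the proof given there.
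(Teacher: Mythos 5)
The paper does not prove this statement at all: it is imported from \cite{KKO}, whose argument is based on the Diregularity lemma and expansion/shifted-walk techniques, not on absorption. The central gap in your sketch is the claim that ``standard double counting under the semi-degree hypothesis produces many constant-size $w$-absorbers'' when $\delta^0(G)\geq cn$ with $c$ only slightly above $3/8$. In an oriented graph $N^+(w)$ and $N^-(w)$ are disjoint sets of size roughly $3n/8$, and a direct degree count shows it is consistent with the hypothesis that \emph{every} edge between them is oriented from $N^+(w)$ to $N^-(w)$: a vertex of $N^+(w)$ can still collect in-degree $3n/8$ from inside $N^+(w)$, from the remaining $\approx n/4$ vertices and from $w$, and similarly for out-degrees in $N^-(w)$. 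So even the most basic absorbing gadget (an edge $a\to b$ with $a\in N^-(w)$, $b\in N^+(w)$) need not exist for some $w$, and no elementary counting argument is known that supplies absorbers at this density; this is exactly why the proofs at the $3/8$-threshold (\cite{KKO}, \cite{KeeKO}) avoid absorption, and why the concluding remarks of the present paper point out that no regularity-free argument is available. Your closing appeal that ``this is precisely the delicate analysis performed in \cite{KKO}'' is therefore misplaced: \cite{KKO} constructs no absorbing path, so the hardest step of your plan has no support, either in your sketch or in the cited source.

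Two further steps fail as stated. First, the short-path lemma: with $|N^+(u)|,|N^-(v)|\approx 3n/8$ these sets may be disjoint, and the counting you indicate does not force any edge from $N^+(u)$, or from its first out-neighbourhood, into $N^-(v)$; one can obtain a bounded-length connection, but only by iterating a genuine (robust) expansion argument a constant number of times, not with paths of length at most $3$. Second, the contraction endgame does not encode the structure you need: after contracting $\{x,s\}$ to a vertex $z_1$, the desired $x$--$s$ path becomes a closed walk at $z_1$ rather than a segment of a Hamilton cycle through $z_1$, and a Hamilton cycle in the contracted graph may enter and leave $z_1$ using edges incident to $x$ alone, which does not uncontract to an $x$-to-$s$ path; in addition the contracted graph need not be an oriented graph nor satisfy the semidegree hypothesis at $z_1,z_2$ in the required directed sense. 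The standard repair is to contract the absorbing path to a single vertex whose in-edges are those of $s$ and whose out-edges are those of $t$, and to add an auxiliary arc from $y$ to $x$, but then the hypotheses of Theorem \ref{thm:exact semidegree condition} must be verified for that auxiliary digraph, which your sketch does not do. As it stands, the proposal is not a proof, and it is not the route by which the paper (via \cite{KKO}) justifies the theorem.
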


    Before describing the next tool we need the following definition.

    \begin{dfn}
	Given an $n$-vertex oriented graph $G$, a subgraph $D\subseteq G$
	is said to be a \emph{$\delta $-absorber} if, for any given $d$-regular
	subgraph $T$ which is edge-disjoint from $D$ with
	$d \leq \delta n$, the oriented graph $D \cup T$ has
	a Hamilton decomposition.
	\end{dfn}

	The following result is the main ingredient in the seminal paper of
	K\"uhn and Osthus in which they solved Kelly's conjecture \cite{KO}. Roughly speaking,
	the theorem states that there are $\delta$-absorbers for arbitrarily small $\delta$
	in any sufficiently large regular oriented graph. This refined version follows immediately from the directed version of Theorem 3.16 in \cite{KOI}. 
	\begin{thm}
		\label{thm: Kuhn-Osthus Absorbing}
		Let $\varepsilon>0$ and $c>3/8$ be two constants. Then, there is $\delta >0$
		such that for sufficiently large $n$ the following holds. Suppose that $G$ is an $n$-vertex oriented graph with $\delta^{0}(G)\geq cn$. Then $G$ contains a
		$\delta $-absorber 	$A$ as an oriented
		subgraph, where $A$ is $r$-regular with $r\leq \varepsilon n$.
	\end{thm}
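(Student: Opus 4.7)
The plan is to construct $A$ as a sparse $r$-regular oriented subgraph of $G$ that carries a rigid global cyclic ``skeleton'' together with many local rerouting gadgets, so that any additional $d$-regular $T$ (with $d \le \delta n$, $\delta \ll \varepsilon$) can be woven into a full Hamilton decomposition of $A \cup T$. My approach would follow the absorbing framework of K\"uhn and Osthus, adapted to the oriented setting.

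First I would apply the Szemer\'edi regularity lemma for digraphs to obtain an $\varepsilon'$-regular partition $V_1,\dots,V_k$ of $V(G)$ together with a reduced digraph $R$; one checks that $R$ inherits the semi-degree condition $\delta^0(R)\ge (c-o(1))k$, so $c>3/8$ and Theorem \ref{thm:exact semidegree condition} give a Hamilton cycle $C = V_{i_1}V_{i_2}\cdots V_{i_k}V_{i_1}$ in $R$. By repeatedly extracting Hamilton cycles from $R$ after removing small regular pieces, one obtains a family of edge-disjoint ``blow-up Hamilton cycles'' of $G$, each consisting of super-regular bipartite pairs on consecutive clusters, that together will carry the absorber.

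Inside each super-regular pair on $C$, I would randomly choose thin subgraphs forming the two ingredients of $A$: a \emph{skeleton} of near-perfect matchings between $V_{i_j}$ and $V_{i_{j+1}}$ whose union along $C$ is approximately $r$-regular, and a collection of \emph{chord absorbers}: small oriented gadgets that, for each designated $(x,y)$, provide many edge-disjoint short $x$--$y$ routes through neighboring clusters. Chernoff's inequality (from the Tools section) controls degrees during this random selection, and the near-regular graph obtained can be tweaked by a small number of local swaps (balancing surpluses and deficits via short paths, which exist by Theorem \ref{thm:Hamilton connectivity} applied to localized subgraphs) so that $A$ becomes exactly $r$-regular with $r\le \varepsilon n$.

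To verify the absorbing property, given any $d$-regular $T$ edge-disjoint from $A$, the union $A\cup T$ is $(r+d)$-regular, and for $\delta$ sufficiently small relative to the chord-absorber density the extra edges of $T$ can be split into $(r+d)$ approximate matchings aligned with $C$, each of which is completed to a Hamilton cycle by rerouting through the chord absorbers. The main obstacle is exactly this final step: ensuring that the same absorber $A$ works for \emph{every} admissible $T$, not just one. The chord absorbers must be simultaneously robust enough to realign any small $T$ into the cyclic order defined by $C$ while keeping the perturbation of degrees negligible at every stage; budgeting these rerouting operations and showing that they can be carried out sequentially without exhausting the gadgets is the most delicate part of the argument and is the reason the statement is cited rather than reproved here.
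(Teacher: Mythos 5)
This statement is not proved in the paper at all: it is quoted as a black box, being (as the paper says) the main ingredient of K\"uhn and Osthus's proof of Kelly's conjecture \cite{KO}, with the refined regular form taken from the directed version of Theorem 3.16 in \cite{KOI}. So the relevant comparison is between your sketch and the K\"uhn--Osthus machinery itself, and there your proposal has a genuine gap. The entire content of the theorem is the \emph{absorbing property}: one fixed sparse regular subgraph $A$ must admit a Hamilton decomposition of $A\cup T$ for \emph{every} $d$-regular $T$ with $d\le\delta n$ that is edge-disjoint from $A$. Your sketch reduces this to ``split the edges of $T$ into approximate matchings aligned with the cycle $C$ of clusters and reroute through chord absorbers,'' but this is precisely the step that does not follow from regularity, Chernoff, and Hamilton connectivity: an arbitrary $T$ has no relation whatsoever to the cluster cycle, its edges can concentrate inside clusters or run against the cyclic orientation, and turning $T\cup(\text{skeleton})$ into genuine Hamilton cycles (rather than $1$-factors with many short cycles) is exactly what the robust decomposition lemma of \cite{KO} is engineered to do, via chord absorbers built around universal walks together with a separate preprocessing structure that first ``tidies up'' $T$. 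You acknowledge this yourself (``the most delicate part \ldots is the reason the statement is cited rather than reproved here''), which means the proposal is a plan plus a citation, not a proof.

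Two further steps in the sketch are also shakier than presented. First, converting an approximately regular oriented subgraph into an exactly $r$-regular one by ``local swaps'' is delicate in the oriented setting: you may only use edges of $G$, must avoid creating antiparallel pairs, and must simultaneously fix in- and out-degrees; this needs an argument (in the paper's own toolkit the analogous statement, Corollary \ref{lemma:Making sparse regular}, is only proved for bipartite graphs via Gale--Ryser). Second, the exceptional set of the regularity partition and the vertices lost when passing to super-regular pairs must be reincorporated into every Hamilton cycle of the decomposition, which is another place where \cite{KO} needs substantial extra structure and which your sketch does not address. If your intention is simply to invoke the theorem as the paper does, that is legitimate; but as a blind proof attempt the argument is incomplete at its central point.
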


\section{Almost Hamilton decompositions of special oriented graphs}

Our aim in this section is to show how certain special oriented graphs can be
 almost decomposed into Hamilton cycles.

\subsection{Completing one Hamilton cycle}

The following simple lemma will allow us to complete
disjoint directed paths into Hamilton cycles.

\begin{lem}
	\label{lem: completion of path covers to Ham cycles}
	Let $c>3/8$ and $a, N \in {\mathbb N}$ with $a \ll \frac{N}{\log N}$ and $N$ sufficiently large. Let $F$ be an oriented graph with
	$|V(F)| = N$ and $\delta^0(F)\geq cN$. Let $\{P_i\}_{i\in [a]}$ be a collection of vertex disjoint oriented paths, each of which is disjoint to $F$. Let $x_i$ and $y_i$ to denote the first and last vertices of $P_i$, for each $i$, and assume that $d^-(x_i,F),d^+(y_i,F)\geq 2a$.
Then there is a cycle $C$ with the following properties:
\begin{enumerate}
  \item Each $P_i$ appears as a segment of $C$;
  \item $F \subseteq V(C)$.
\end{enumerate}
\end{lem}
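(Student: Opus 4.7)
The plan is to construct $C$ in three stages. First, I greedily choose $2a$ distinct ``connector'' vertices in $F$: for $i = 1, \ldots, a$, pick distinct vertices $u_i \in N^+(y_i, F)$ and $v_i \in N^-(x_i, F)$. This is possible because each of these $2a$ neighbourhoods has size at least $2a$ while at most $2a - 1$ vertices are forbidden at any step. The $u_i$ will be the ``landing points'' of the $y_i$ into $F$ and the $v_i$ the ``take-off points'' to the $x_i$.

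Second, I partition the remaining vertices of $F$ randomly and use Chernoff to get pieces of high semi-degree. Set $W := V(F) \setminus \{u_j, v_j\}_{j \in [a]}$ (of size $N - 2a$), partition $W$ uniformly at random into sets $W_1, \ldots, W_a$ of almost equal size, and set $S_i := W_i \cup \{u_i, v_{i+1}\}$ for $i \in [a]$, with indices mod $a$ so that $v_{a+1} := v_1$. The $S_i$ are pairwise disjoint with $\bigcup_i S_i = V(F)$ and $|S_i| = (1+o(1))N/a \gg \log N$, since $a \ll N/\log N$. For any vertex $v \in V(F)$ and any $j \in [a]$, $|N^+(v,F) \cap W_j|$ is a sum of at least $cN - 2a$ indicator variables of mean $1/a$ each, hence has expectation at least $(1-o(1))c|S_j|$, and similarly for $|N^-(v,F) \cap W_j|$. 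Chernoff's inequality (Remark~\ref{rem:hyper}) and a union bound over the $O(aN)$ tail events -- each of failure probability $\exp(-\Omega(N/a))$ -- yield, with probability $1 - o(1)$, that
\[
\delta^0(F[S_j]) \;\geq\; (c - o(1))|S_j| \;>\; \tfrac{3}{8}|S_j| \quad \text{for every } j \in [a].
\]
Fix such a partition.

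Third, for each $j \in [a]$, Theorem~\ref{thm:Hamilton connectivity} applied to $F[S_j]$ (of size at least some constant, which is eventually large with $N$) produces a Hamilton path $Q_j$ from $u_j$ to $v_{j+1}$. Concatenating cyclically the blocks $P_1, Q_1, P_2, Q_2, \ldots, P_a, Q_a$ and inserting the edge $y_i \to u_i$ between $P_i$ and $Q_i$ (guaranteed by $u_i \in N^+(y_i,F)$) and the edge $v_{i+1} \to x_{i+1}$ between $Q_i$ and $P_{i+1}$ (guaranteed by $v_{i+1} \in N^-(x_{i+1},F)$), one obtains a closed directed walk. By construction, each $P_i$ appears as a segment and the $V(Q_j)$ partition $V(F)$, so this is the desired cycle $C$.

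The main obstacle is the Chernoff step: one must argue that a random partition of $V(F)$ into $a$ nearly equal parts preserves the semi-degree density above $3/8$ on each part. This rests on the scale $|S_j| = \Theta(N/a)$ together with $\Theta(aN)$ tail events, and it is precisely this calculation that forces the quantitative hypothesis $a \ll N/\log N$; a weaker bound on $a$ would require a more delicate analysis of the random partition.
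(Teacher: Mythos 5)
Your proposal is correct and follows essentially the same route as the paper's own proof: pick $2a$ distinct connector vertices using the degree hypothesis, randomly partition the rest of $V(F)$ into $a$ parts whose induced subgraphs retain semi-degree above $3/8$ of their order (via Chernoff and the hypothesis $a \ll N/\log N$), and then join consecutive paths by Hamilton paths between connectors supplied by Theorem~\ref{thm:Hamilton connectivity}. The only cosmetic difference is that the paper assigns the connectors to their parts before randomizing rather than adjoining them afterwards, which changes nothing in the argument.
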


\begin{proof}[Proof of Lemma \ref{lem: completion of path covers to Ham cycles}]
	For each $i\in [a]$ select
	$t_i \in N^-(x_i)$ and $s_i \in N^+(y_i)$ such that all $2a$ vertices are distinct. Note that this
	is possible as $d^-(x_i,F), d^+(y_i,F) \geq 2a$. Let
	$S = \{s_i: i\in [a]\}$, $T = \{t_i: i\in [a]\}$ and
	$W = V(F)$.
	
	Let us create a partition of $W$ into
	$a$ sets, $W_1,\ldots, W_a$, by assigning $s_i$
	and $t_{i+1}$ to $W_i$ for all $i\in [a]$ (taking $a+1$ to be
	$1$) and by randomly assigning each vertex $v\in W \setminus (S \cup T)$
	to one of the sets uniformly and independently
	at random. Now, let $\varepsilon_0 = (c-3/8)/4 > 0$ and consider the events:
		\begin{align*}
			A
				& =
			``|W_i| \in (1 \pm \varepsilon _0)
					\frac {|W|}{a} \mbox{ for all }
					i\in [a]"\\
			B
				& =
			``d^{\pm }_{F[W_i]}(v) \geq
			\big (c - \varepsilon _0 \big )
			\frac{|W|}{a} \mbox { for all }
			v\in W \mbox { and } i\in [a]".
		\end{align*}
	As
	${\mathbb E}(|W_r|) = \frac {|W|}{a}$, using that $N\gg a\log a$ and
	Chernoff's inequality, we obtain
		\begin{equation}
			\label{equation: control of U_k sets}
			{\mathbb P}[{A}^c]
			\leq
				2m\exp \Big (-\frac {\varepsilon _0^2|W| }{3a} \Big )
			= o(1).
		\end{equation}
Also, as $\delta^0(F)\geq cN=c|W|$ and all but at most $2a$ vertices were assigned randomly, we have
$${\mathbb E}(d^{\pm }(v,W_i))	\geq \frac{c|W|-2a}{a}=c\frac{|W|}{a}-2.$$
Again using that $|W|\gg a \log N$ together with Chernoff's inequality, we have
		\begin{equation}
			\label{equation: contol of degrees of vertices in S in U_k}
			{\mathbb P} [B ^c]
				\leq
			2N \exp \Big (- \Theta \big (\frac {\varepsilon _0^2 |W|}{a} \big ) \Big ) = o(1).
		\end{equation}
	Combining \eqref{equation: control of U_k sets} with
	\eqref{equation: contol of degrees of vertices in S in U_k}
	we conclude ${\mathbb P}( A\cap B ) >0$.
	Fix a partition $W_1,\ldots ,W_{a}$
	such that $A\cap B$ holds.\vspace{2mm}
	
	To complete the proof, set $F_{i} := F[W_i]$ for each $i\in [a]$. As $A \cap B$ holds, we have
		\begin{equation*}
			\delta ^{0}(F_i)
				\geq
			\big (c - \varepsilon _0 \big ) \frac {|W|}{a}
				\geq
			\big (c - 3\varepsilon _0 \big ) |V(F_i)| = (3/8 + \varepsilon _0) |V(F_i)|.
		\end{equation*}
	Therefore, using that $|V(F_i)| \geq (1-\varepsilon _0)|W|/a \geq
	N/2a \gg \log N$ and $N$ is sufficiently large, it follows from Theorem \ref{thm:Hamilton connectivity} that $F_{i}$ contains
	a Hamilton path ${I}_i$ from $s_{i}$
	to $t_{i+1}$, for each $i$. All in all, the cycle $C=P_1I_1P_2I_2\ldots P_aI_aP_1$ (with the connecting edges $y_is_i$ and $t_{i+1}x_{i+1}$) gives the desired cycle. This completes the proof of the lemma.
\end{proof}

\subsection{Completing `many' edge-disjoint Hamilton cycles}

Next we will show how to repeatedly apply Lemma \ref{lem: completion of path covers to Ham cycles} to obtain `many' edge-disjoint Hamilton cycles. Before stating this result we introduce the following definitions.

\begin{dfn} Let $G$ be an oriented graph.
\begin{enumerate}
	\item A \emph{path cover} of $G$ of size $a$ is a collection
	of $a$ vertex disjoint directed paths in $G$ which cover all vertices in $V(G)$.
\item An $(a,t)_{\mathcal P}$-\emph{family} is a collection of $t$ edge-disjoint paths covers of $G$, each of which is of size at most $a$.
	\item Let ${\mathcal P}(G, a, t)$ denote the set of all $(a,t)_{\mathcal P}$-families in $G$.
\item Given ${\bf P} \in {\mathcal {P}}(G, a, t)$, let $G_{\bf P}$ 	denote the
	oriented subgraph $G_{\bf P} = \bigcup _{P\in \bf P} E(P)$.
\end{enumerate}
\end{dfn}

\noindent \emph{Remark:} The above definitions  include the possibility of
paths of length $0$, i.e. isolated vertices.\vspace{2mm}

One can think about a path cover ${P}$ of small size as an
`almost Hamilton cycle', in the sense that by adjoining a small number of edges to ${P}$ we can obtain a Hamilton cycle. Our aim in the following lemma is to show how, given `many' edge-disjoint path covers, one can build `many' edge disjoint Hamilton cycles.

\begin{lem}
	\label{lem: completion of many hamilton cycles}
	Let $c>3/8$ and let $a, b, n, s, t \in {\mathbb N}$ with
	$t+a \log n \ll s \ll n$.
	Suppose that $H$ is an $n$-vertex oriented
	graph with partition $V(H) = U \cup W$,
	where $|W| = s$, with the following properties:
		\begin{enumerate}
			\item There is ${\bf P}\in \mathcal P(H[U],a,t)$;		
			\item $\delta ^0(H_{\bf P}[U])
			\geq t - b$;
			\item \label{lem: completion iii}
			$d^{\pm }(u,W)>
			2a + b$ for all $u\in U$;
			\item The oriented subgraph $F = H[W]$ satisfies
			$\delta^0(F) \geq c
			|W|$;
		\end{enumerate}
	Then $H$ contains a family ${\cal C} = \{C_1,\ldots ,C_t\}$ of $t$ edge
	disjoint Hamilton cycles, where each cycle $C_i$ contains all the paths in $\mathcal P_i$ as segments.
\end{lem}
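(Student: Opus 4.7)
The plan is to construct the Hamilton cycles $C_1,\ldots,C_t$ one at a time, greedily, using Lemma~\ref{lem: completion of path covers to Ham cycles} at each step applied to the ``leftover'' graph obtained by deleting the edges of the cycles built so far. Since the $t$ path covers in $\mathbf{P}$ are already pairwise edge-disjoint within $H[U]$, the only task is to extend each $\mathcal{P}_i$ to a Hamilton cycle $C_i$ whose remaining edges (used to connect the segments and to traverse $W$) avoid those of $C_1,\ldots,C_{i-1}$.

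Formally, set $H_1 := H$ and proceed by induction on $i$. Suppose edge-disjoint Hamilton cycles $C_1,\ldots,C_{i-1}$ have been constructed, with each $C_j$ containing every path of $\mathcal{P}_j$ as a segment. Let $H_i := H \setminus \bigcup_{j<i} E(C_j)$ and $F_i := H_i[W]$, and let $x_j,y_j \in U$ denote the first and last vertex of the $j$-th path of $\mathcal{P}_i$. Apply Lemma~\ref{lem: completion of path covers to Ham cycles} with ambient graph $H_i$, oriented graph $F_i$ in the role of $F$, and the paths of $\mathcal{P}_i$ as $\{P_j\}$. This yields a Hamilton cycle $C_i \subseteq H_i$ of $H$ that contains every path of $\mathcal{P}_i$ as a segment and covers $W$. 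Since $C_i$ uses only edges of $H_i$, the family $\{C_1,\ldots,C_t\}$ will automatically be edge-disjoint.

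\textbf{Verifying the hypotheses.} The substance of the argument is to check that Lemma~\ref{lem: completion of path covers to Ham cycles} applies at every iteration. The condition $a \ll s/\log s$ is immediate from $a\log n \ll s$. For the semidegree of $F_i$, note that each previously constructed cycle $C_j$, restricted to $W$, consists of $a$ disjoint directed paths (the pieces $I_1,\ldots,I_a$ from the proof of Lemma~\ref{lem: completion of path covers to Ham cycles}), so each $v\in W$ loses at most one in-edge and one out-edge of $F$ per previous iteration (its other neighbours in $C_j$ lie in $U$). Hence
\[
\delta^0(F_i) \;\geq\; c|W| - (i-1) \;\geq\; cs - t \;\geq\; c' s,
\]
for any fixed $c' \in (3/8, c)$, valid for large $n$ since $t \ll s$. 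For the endpoint degree conditions, observe that an out-edge from $u\in U$ to $W$ can be used by a previous cycle $C_j$ only when $u$ is a terminal endpoint of some path in $\mathcal{P}_j$. Because $d^{+}_{H_{\bf P}}(u) \geq t-b$, $u$ is a terminal endpoint in at most $b$ of the covers $\mathcal{P}_1,\ldots,\mathcal{P}_t$, so at most $b$ out-edges from $u$ to $W$ have been used before iteration $i$, giving $d^{+}(u,F_i) \geq d^{+}(u,W) - b > (2a+b) - b = 2a$. The analogous calculation with $d^{-}_{H_{\bf P}}(u) \geq t-b$ shows $d^{-}(u,F_i) > 2a$.

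\textbf{Main obstacle.} Once these hypotheses have been certified at every step, Lemma~\ref{lem: completion of path covers to Ham cycles} delivers each $C_i$ without additional work. The only delicate point is the bookkeeping of used edges: the slack ``$b$'' in conditions (2) and (3) and the quantitative assumption $t + a\log n \ll s$ are precisely what is needed so that both degree bounds survive all $t$ iterations. Beyond this accounting, no new ideas are required.
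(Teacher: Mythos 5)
Your proposal is correct and follows essentially the same route as the paper: iteratively delete the edges of the cycles built so far, verify the hypotheses of Lemma~\ref{lem: completion of path covers to Ham cycles} for the leftover graph (using that a vertex of $U$ can be a first or last vertex of a path in at most $b$ of the covers, and that each previous Hamilton cycle removes at most one in- and one out-edge at each vertex of $W$), and apply that lemma to complete each path cover. The only cosmetic issue is writing $d^{+}(u,F_i)$ for what is really the out-degree of $u$ into $W$ in the leftover graph $H_i$, which matches the paper's intended meaning.
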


\begin{proof}
	Let ${\bf P}\in \mathcal P(H[U],a,t)$ and write $${\bf P}=\{{\cal P}_j \mid j\in[t]\}.$$
	For each $j$, let $\mathcal P_j = \{P_{j,r}\}_{r \in [R_j]}$  denote the collection
	of all directed paths in the path cover $\mathcal P_j$.	As $\mathcal P_j$ has size
	at most $a$ we have $R_j \leq a$.
	
	Now we wish to turn each $\mathcal P_j$ into a Hamilton cycle $C_j$ of $H$ in such a
	way that
		\begin{enumerate}[(i)]
			\item all the paths in $\mathcal P_j$ are segments of $C_j$, and
			\item $C_i$ and $C_j$ are edge-disjoint for all $i\neq j$.
		\end{enumerate}
This will be carried out over a sequence of steps where in step $j$ we have already selected $C_1,\ldots,C_{j-1}$, and the cycle $C_j$ is chosen by showing that the oriented graph $H_j=H\setminus \big (\bigcup_{i\leq j-1} E(C_i)\big )$ satisfies the requirements of Lemma \ref{lem: completion of path covers to Ham cycles}. Let us fix $c>c'>3/8$.

Suppose that we have already found $C_1,\ldots ,C_{j-1}$ and we wish to find $C_j$. Let $x_i$ and $y_i$ denote the start and end vertices of $P_{j,i}$, for all $i\leq R_j$. First note that by property $3$, each vertex $u\in \{x_i,y_i \mid i\leq R_j\}$ satisfies $d^{\pm}(u,W)>2a + b$. By property $2$, each vertex $v$ appears the first vertex of at most $b$ paths and as the last vertex of at most $b$ paths (otherwise $v$ would have in-degree or out-degree less than $t-b$ in $H_{\bf P}(U)$). Therefore,
for all $u\in U$ we have
$$d^{\pm}_{H_j}(u,W)\geq 2a.$$
Second, as the edges of less than $j$ Hamilton cycles have been deleted from $H$, from property $4.$ we find that $F_j=H_j[W]$ satisfies $\delta^0(F_j)\geq c|W| - j +1 \geq c'|W|$, using $|W| = s \gg t \geq j$. Lastly, we have $|W| = s \gg a \log n \gg a\log s$ by hypothesis.

All combined, we have shown that the graph $H_j$ satisfies the conditions of Lemma \ref{lem: completion of path covers to Ham cycles} with $N = |W|$. Therefore Lemma \ref{lem: completion of path covers to Ham cycles} guarantees the cycle $C_j$ exists. Thus we can find $C_1,\ldots, C_t$, as required.
	\end{proof}

\section{Path covers of oriented graphs}

In the previous section we have shown how to extend edge disjoint path covers to edge disjoint Hamilton cycles in certain special oriented graphs. In this section we will show how to located such path covers, using a number of well-known matching results. The main result of the section is the following:

\begin{lem}
	\label{lem: path cover lemma}
Let $m,r \in {\mathbb N}$ with $ r \geq m^{49/50}$ and $m$ sufficiently large. Suppose that $H$ is an $m$-vertex oriented graph with
$$r-r^{3/5}\leq \delta^0(H)\leq \Delta^0(H)\leq r+r^{3/5}.$$
	Then, taking $a = m/\log ^4m$ and $t =r - m^{24/25}\log m$, the following hold:
		\begin{enumerate}
			\item There is a set ${\bf S} \subseteq {\mathcal P}(H,a,t)$ with
			 $|{\bf S}| \geq r^{(1-o(1))rm}$;
			\item For all ${\bf P} \in {\bf S}$ the oriented subgraph $H_{\bf P}$
			satisfies $\delta^0(H_{\bf P})\geq r - m/{\log ^{4}m}$. 		
		\end{enumerate}
\end{lem}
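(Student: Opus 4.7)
The plan is to lower-bound $|\mathbf{S}|$ via a bipartite-matching approach: extract a $t$-regular oriented spanning subgraph $H^* \subseteq H$, count decompositions of $H^*$ into edge-disjoint $1$-factors using Van der Waerden, and convert each $1$-factor into a short path cover by breaking one edge per cycle.

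First, consider the bipartite double cover $B(H) = (A \sqcup B, E)$ with $|A| = |B| = m$ and $a_u b_v \in E$ iff $\vec{uv} \in E(H)$, so that all degrees of $B(H)$ lie in $[r - r^{3/5}, r + r^{3/5}]$. Since $t = r - m^{24/25}\log m \leq r - r^{3/5}$ for large $m$, a direct calculation from these degree bounds verifies the Gale-Ryser condition (Theorem \ref{gale}), yielding a $t$-factor of $B(H)$ which in turn corresponds to a $t$-regular spanning oriented subgraph $H^* \subseteq H$.

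Next, I lower-bound the number of ordered decompositions of $B(H^*)$ into $t$ perfect matchings. At step $k$ the remaining bipartite graph is $(t-k+1)$-regular, and by Van der Waerden's Conjecture (Theorem \ref{VDW}) contains at least $((t-k+1)/e)^m$ perfect matchings. Multiplying across all $t$ steps and applying Stirling yields at least
\[
\Big(\frac{t!}{e^t}\Big)^m \;\geq\; \Big(\frac{t}{e^2}\Big)^{tm} \;\geq\; r^{(1-o(1))rm}
\]
ordered decompositions, using $t = r(1-o(1))$ and $r \geq m^{49/50}$.

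Each perfect matching of $B(H^*)$ corresponds to a $1$-factor of $H^*$---a spanning disjoint union of directed cycles---and I convert these into path covers of size $\leq a = m/\log^4 m$ by removing a single edge from each cycle. To stay within size $a$, I restrict to decompositions in which every $1$-factor has at most $a$ cycles; a Br\'egman-type enumeration (Remark \ref{Bregman wrt max degree}) of matchings that contain a prescribed large collection of short cycles shows that the bad fraction at each intermediate step is $o(1)$, echoing the classical fact that a uniform permutation of $[m]$ almost surely has $O(\log m) \ll a$ cycles. The deleted edges are chosen in a balanced way; since the total loss across the $t$ covers is at most $at$ and averages $at/m \leq a/2$ per vertex (using $t \leq r \leq m/2$), a probabilistic or Hall-type assignment guarantees that no vertex loses more than $a - m^{24/25}\log m$ in- or out-edges, giving $\delta^0(H_{\bf P}) \geq t - (a - m^{24/25}\log m) = r - a$. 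Since the closing edge of each path is uniquely determined, the map from unordered decompositions to families is injective, so $|\mathbf{S}|$ inherits the lower bound $r^{(1-o(1))rm}$ up to a negligible $t!$ factor. The main obstacle is the cycle-counting step: converting the uniform-permutation heuristic into a Br\'egman-based enumeration that keeps the aggregate $o(1)$-factor loss across all $t$ steps comfortably below the Van der Waerden lower bound, while simultaneously allowing the balanced cycle-breaking despite cycle lengths possibly being as small as $2$.
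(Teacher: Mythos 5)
Your route is genuinely different from the paper's (which never touches cycle factors: it randomly partitions $V(H)$ into $b=2\log^4 m$ parts, uses Tillson's Hamilton-path decomposition of the complete digraph $D_b$ to split $H$ into layered graphs, and builds path covers by concatenating large matchings between consecutive parts, so no cycle-breaking is ever needed), but as it stands it has a genuine gap exactly at the point you flag as the ``main obstacle''. You need that almost all of the $r^{(1-o(1))rm}$ decompositions of $H^*$ into $1$-factors have every factor consisting of at most $a=m/\log^4 m$ cycles, and the proposed Br\'egman-type enumeration cannot deliver this. If a cycle factor has at least $a$ cycles, all you can say is that at least $a/2$ of them have length at most $2m/a=2\log^4 m$, so you must enumerate collections of prescribed cycles of length up to $\Theta(\log^4 m)$. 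For a prescribed cycle of length $k$, the ratio between the Br\'egman upper bound for matchings containing it and the Van der Waerden lower bound for all matchings is roughly $(e/d)^k$ (up to the $(8d)^{m/d}$ term), while the number of $k$-cycles in $H^*$ can be as large as $md^{k-1}/k$; the product is about $me^k/(kd)$ \emph{per cycle}, and with $k$ as large as $\log^4 m$ the factor $e^k$ overwhelms $d\leq m$ so badly that even after dividing by $\ell!$ with $\ell=a/2$ the union bound exceeds the main term by $e^{\Theta(m)}$. This is precisely the regime where the uniform-permutation heuristic does not transfer: for the complete digraph the exact expected number of $k$-cycles is $1/k$, but the Br\'egman/VdW sandwich is off by $e^{\Theta(k)}$ and cannot see this cancellation. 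So the central counting step is not just unfinished; the tools you name provably do not suffice for it, and it is unclear how to establish the claim for an arbitrary near-regular oriented graph. (This is the structural reason the paper works with bipartite pieces between disjoint vertex classes, where unions of matchings are automatically path covers with few paths.)

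There is a second, independent gap in the degree bound (property 2). Knowing the aggregate number of deleted edges is at most $at$ does not give a per-vertex bound of roughly $a$: in each cycle the deleted edge charges one vertex an out-edge and one an in-edge, and across the $t$ edge-disjoint factors the cycles may concentrate on a small vertex set $S$, in which case vertices of $S$ must absorb all those charges. Edge-disjointness alone allows up to about $|S|\min(d,|S|)/3$ cycles lying inside $S$ over all factors, which exceeds the available budget $|S|\cdot a$ whenever $r\gg m/\log^4 m$ --- exactly the regime of the intended application, where $r\approx cn/\log^3 n > a$. So neither a uniformly random choice of broken edge nor a routine Hall-type argument works without further structural control of the factors, which you do not have. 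Finally, a smaller repair is needed at the first step: Lemma \ref{factor} requires $\alpha\geq 1/2$ and so cannot be cited to extract a $t$-factor when $r<m/2$; moreover a near-regular bipartite graph need not have any $t$-factor at all (an unbalanced component is an obstruction), so this step needs its own Gale--Ryser computation for the double cover, or the paper's device of adding edges to regularize (Corollary \ref{lemma:Making sparse regular}) and then counting matchings that avoid the added edges (Lemma \ref{many large matchings}).
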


\subsection{Finding $r$-factors in bipartite graphs}

We show that given a dense bipartite graph $G=(A\cup B,E)$ which is `almost regular', $G$ contains a spanning $r$-regular subgraph (an \emph{$r$-factor}), with $r$ very close to $\delta(G)$.

\begin{lem}
  \label{factor}
  Let $\alpha\geq 1/2$, $m \in {\mathbb N}$ and $\xi=\xi(m)\geq 0$. Suppose $G=(A\cup B,E)$ is a bipartite graph with $|A|=|B|=m$ and $\alpha m+\xi\leq \delta(G)\leq \Delta(G)\leq \alpha m+\xi +{\xi^2}/{m}$. Then $G$ contains an $\alpha m$-factor.
\end{lem}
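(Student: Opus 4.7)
The plan is to apply the Gale-Ryser theorem (Theorem \ref{gale}) with $r = \alpha m$, which reduces the lemma to verifying the inequality
\[
e_G(X,Y) \geq \alpha m(|X| + |Y| - m)
\]
for every $X \subseteq A$ and $Y \subseteq B$. The case $|X| + |Y| \leq m$ is immediate, so write $x = |X|$, $y = |Y|$, and $t = x + y - m > 0$. Using the $A$--$B$ symmetry of the hypothesis, I may assume $x \leq y$, so that $y > m/2 \geq (1-\alpha)m$.

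The analysis splits into three cases. First, if $x \leq \alpha m$, I would use the bound $e_G(X,Y) \geq x\delta - x(m-y) = x(\delta + y - m)$, which holds because each $u \in X$ has at most $m - y$ neighbours outside $Y$; straightforward algebra converts the required inequality into $x\xi \geq (x - \alpha m)(m - y)$, whose right-hand side is non-positive.

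Next suppose $x > \alpha m$, so in particular $y \geq x > \alpha m$. If $t \geq \xi$, then from the identity
\[
e_G(X,Y) = \sum_{u \in X} d(u) + \sum_{v \in Y} d(v) - e(G) + e_G(\bar X, \bar Y),
\]
combined with $e_G(\bar X, \bar Y) \geq 0$, $e(G) \leq m\Delta$, and the hypothesised degree bounds, I obtain
\[
e_G(X,Y) \geq (x+y)\delta - m\Delta = \alpha m t + \xi(t - \xi) \geq \alpha m t.
\]
If instead $0 < t < \xi$, I would return to the bound $e_G(X,Y) \geq x(\delta + y - m)$: since $y > (1-\alpha) m$, we have $\delta + y - m > \xi$, and therefore $e_G(X,Y) > x\xi > \alpha m \xi > \alpha m t$, using $x > \alpha m$ together with $t < \xi$.

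I expect the $0 < t < \xi$ regime to be the main obstacle: here both $x$ and $y$ sit just above $\alpha m$, so $t$ is small, and the $-\xi^2$ error arising from $\Delta - \delta \leq \xi^2/m$ spoils the bound $(x+y)\delta - m\Delta$ used in the $t \geq \xi$ case, while the first-case argument no longer applies once $x > \alpha m$. The way out is to exploit the assumption $\alpha \geq 1/2$, which forces $y > (1-\alpha) m$ whenever $y > m/2$, providing just enough slack in the one-sided bound $x(\delta + y - m)$ to close the gap.
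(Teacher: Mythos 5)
Your proposal is correct and follows essentially the same route as the paper: reduce to the Gale--Ryser condition $e_G(X,Y)\geq \alpha m(|X|+|Y|-m)$ and verify it by an elementary case analysis using the one-sided bounds $e_G(X,Y)\geq x(\delta+y-m)$ and $e_G(X,Y)\geq (x+y)\delta-m\Delta$ together with $\alpha\geq 1/2$. The only differences are cosmetic: you split at $x\leq\alpha m$ rather than $x\leq\delta(G)$ (which is why you need the extra subcase $0<t<\xi$ that the paper avoids, since $x,y>\delta$ already forces $x+y-m\geq 2\xi$ there), and you use a mild variant of the paper's estimate $e_G(X,Y)\geq \delta x-\Delta(m-y)$ in the main case.
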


\begin{proof}
By Theorem \ref{gale}, to prove the lemma it suffices to show that for all $X \subset A$ and $Y \subset B$ we have
	\begin{equation}
		\label{eq:gale}
		e_G(X,Y) \geq r(|X| + |Y| - m).	
	\end{equation}
Given such sets $X$ and $Y$, let $x=|X|$ and $y=|Y|$. We may assume that $x\leq y$, as the case $y\leq x$ follows by symmetry. We will make use of the following two trivial estimates for $e_G(X,Y)$:
\begin{enumerate}[(i)]
	\item \label{small x} $e_G(X,Y)\geq x(\delta(G)+y-m)$;
	\item \label{large x}
	$e_G(X,Y)=e_G(X,B)-e_G(X,B\setminus Y) \geq \delta(G)x-\Delta(G)(m-y)$.
\end{enumerate}

The required bound follows from the following cases.\vspace{1.5mm}

{\bf Case 1:} $x+y\leq m$. In this case \eqref{eq:gale} trivially holds.\vspace{1.5mm}

{\bf Case 2:} $x\leq y$ and $x\leq \delta(G)$. In this case, note that since $y-m\leq 0$ we obtain
$$x(\delta(G)+y-m)\geq \delta(G)(x+y-m).$$
which by \eqref{small x} proves \eqref{eq:gale}.\vspace{1.5mm}

{\bf Case 3:} $x\leq y$, and $x>\delta(G)$. Observe that in this case since $\alpha\geq 1/2$ we have
\begin{align}
  \label{x+y}
  x+y-m\geq 2\delta(G)-m\geq 2\xi.
\end{align}
Also, from \eqref{large x}, we have
\begin{align}
\label{fin}
  e_G(X,Y)\geq \delta(G)x-\Delta(G)(m-y)
  \geq \alpha m(x+y-m)+\xi(x+y-m)-\frac{\xi^2}{m}(m-y).
\end{align}
Combining \eqref{x+y} with \eqref{fin} and using that $x+y \geq m$, we conclude that
$$\alpha m(x+y-m)+\xi(x+y-m)-\frac{\xi^2}{m}(m-y)\geq \alpha m(x+y-m)+2\xi^2-\xi^2\geq \alpha m(x+y-m),$$
which again proves \eqref{eq:gale}. This completes the proof.
\end{proof}

Using the previous lemma we obtain the following corollary, which shows that by adjoining a small number of edges to an almost regular bipartite graph,  one can obtain a regular bipartite graph.

\begin{cor}
  \label{lemma:Making sparse regular}
  Let $d,m \in {\mathbb N}$, $d \leq m/2$ and $\xi = \xi (m)\geq 0$. Suppose that $G=(A\cup B,E)$ is a
  bipartite graph with $|A|=|B|=m$ and that
  $d-\xi-\xi^2/m \leq \delta(G)\leq \Delta(G)\leq d-\xi$. Then there is a
  bipartite $d$-regular graph $H=(A\cup B,E')$ which contains $G$ as a subgraph.
\end{cor}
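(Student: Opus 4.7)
The plan is to deduce this corollary from Lemma \ref{factor} by working with the bipartite complement of $G$. The key observation is that making $G$ into a $d$-regular graph amounts to finding, for every vertex $v$, a set of $d - d_G(v)$ extra edges in the complement; since the degrees of $G$ lie in a narrow window, the complement is almost regular too, and Lemma \ref{factor} supplies precisely the right factor.

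Concretely, I would first consider $\overline{G} = (A \cup B, \overline{E})$, the bipartite graph on the same vertex classes whose edge set is $\{ab : a\in A, b\in B\} \setminus E$. Using $|A| = |B| = m$ together with the degree hypothesis $d-\xi-\xi^2/m \leq \delta(G) \leq \Delta(G) \leq d-\xi$, a direct computation gives
\begin{equation*}
	m-d+\xi \;\leq\; \delta(\overline{G}) \;\leq\; \Delta(\overline{G}) \;\leq\; m-d+\xi+\frac{\xi^2}{m}.
\end{equation*}
Next I would apply Lemma \ref{factor} to $\overline{G}$ with parameters $\alpha := (m-d)/m$ and the same $\xi$. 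The hypothesis $d \leq m/2$ ensures $\alpha \geq 1/2$, and the displayed inequality above is exactly the degree condition required by the lemma (with $\alpha m + \xi = m-d+\xi$). Thus $\overline{G}$ contains an $(m-d)$-factor $F$.

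Finally, set $H := K_{m,m} \setminus F$, viewing $K_{m,m}$ as the complete bipartite graph on $A \cup B$. Every vertex of $H$ has degree $m-(m-d) = d$, so $H$ is $d$-regular. Moreover, since $F \subseteq \overline{G}$ is edge-disjoint from $G$, we have $E(G) \subseteq E(K_{m,m}) \setminus E(F) = E(H)$, so $H$ contains $G$ as required. There is no real obstacle here once Lemma \ref{factor} is available; the only thing to verify carefully is that the slack $\xi^2/m$ in the degrees of $G$ translates correctly into the slack permitted by Lemma \ref{factor} for $\overline{G}$, which it does on the nose.
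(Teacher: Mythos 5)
Your proof is correct and is essentially identical to the paper's: both pass to the bipartite complement of $G$, whose degrees lie in $[m-d+\xi,\,m-d+\xi+\xi^2/m]$, apply Lemma \ref{factor} (with $\alpha m = m-d$, using $d\leq m/2$ to get $\alpha\geq 1/2$) to extract an $(m-d)$-factor, and take $H$ to be the complement of that factor in $K_{m,m}$. No differences worth noting.
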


\begin{proof}
  Given $G$ as in the lemma, consider the graph $G^c=(A\cup B,E^*)$ where  $e\in E^*$ if and only if $e\notin E$. Clearly $m-d+\xi\leq \delta(G^c)\leq \Delta(G^c)\leq m-d+\xi+\xi^2/m$. Therefore, Lemma \ref{factor} guarantees a $(m-d)$-regular subgraph $S\subseteq G^c$. Letting $H:=S^c$ completes the proof.
\end{proof}

\subsection{Small subgraphs contribute many edges to few matchings}

\begin{lem}
  \label{many large matchings}
Let $m,r \in {\mathbb N}$ with $r \geq m^{24/25}$ and $m$ sufficiently large. Suppose that $G=(A\cup B,E)$ is a bipartite graph with $|A|=|B|=m$ and that $E=E_1\cup E_2$ is a partition of $E$. For $i\in \{1,2\}$ let $H_i$ be the spanning subgraph of $G$ induced by the edges in $E_i$. Suppose also that:
\begin{enumerate}
  \item $G$ is $r$-regular, and
  \item $d_{H_2}(v)\leq 2m^{5/6}$ for all $v\in A\cup B$.
\end{enumerate}
Then $G$ contains at least $(1-o(1))\left(\frac{r}{e}\right)^{m}$ perfect matchings, each with at most $m^{7/8}$ edges from $E_2$.
\end{lem}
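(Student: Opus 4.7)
The plan is to combine a lower bound on the \emph{total} number of perfect matchings of $G$ with an upper bound on the number that use too many edges of $E_2$. Since $G$ is $r$-regular bipartite with parts of size $m$, Theorem \ref{VDW} immediately gives at least $(r/e)^m$ perfect matchings. It therefore suffices to show that the number of \emph{bad} perfect matchings---those with strictly more than $m^{7/8}$ edges in $E_2$---is $o\!\left((r/e)^m\right)$, and subtract.

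To bound the bad matchings, set $k = \lceil m^{7/8}\rceil$. Every bad matching $M$ contains some submatching $N\subseteq M\cap E_2$ of size exactly $k$, so the count of bad matchings is at most
$$\sum_{\substack{N\subseteq E_2\\ |N|=k,\; N \text{ a matching}}} \bigl|\{M \text{ perfect matching of } G : N\subseteq M\}\bigr|.$$
For any such $N$, extensions of $N$ to a perfect matching of $G$ correspond bijectively with perfect matchings of the induced bipartite graph $G'=G[A\setminus V(N),\, B\setminus V(N)]$, which has $m-k$ vertices on each side and maximum degree at most $r$. By Remark \ref{Bregman wrt max degree}, the number of such extensions is bounded by $(8r)^{(m-k)/r}(r/e)^{m-k}\le (8r)^{m/r}(r/e)^m(e/r)^k$. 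On the other hand, the assumption $d_{H_2}(v)\le 2m^{5/6}$ gives $|E(H_2)|\le 2m^{11/6}$, so the number of size-$k$ matchings in $H_2$ is at most $\binom{2m^{11/6}}{k}\le \bigl(2em^{11/6}/k\bigr)^k$. Multiplying, the number of bad matchings is at most
$$(8r)^{m/r}\,(r/e)^m\cdot \left(\frac{2e^2\,m^{11/6}}{k\,r}\right)^{\!k}.$$

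To finish, I will verify that this is $o\!\left((r/e)^m\right)$ using the hypotheses $r\ge m^{24/25}$ and $k\ge m^{7/8}$. Since $m/r\le m^{1/25}$, the prefactor satisfies $(8r)^{m/r}=\exp\bigl(O(m^{1/25}\log m)\bigr)$, while a direct exponent calculation gives $m^{11/6}/(kr)\le m^{11/6-7/8-24/25}=m^{-1/600}$, so the inner factor raised to the $k$-th power is $\exp\bigl(-\Theta(m^{7/8}\log m)\bigr)$. The latter overwhelms the former, giving the required $o\!\left((r/e)^m\right)$ bound. The only real obstacle is arithmetic book-keeping: the exponents $24/25$, $5/6$, $7/8$ are tuned so that the number of choices of $N$ just loses to the Br\'egman discount $(e/r)^k$ gained from the removed vertices, and once that balance is checked the rest is a direct combination of Van der Waerden and Br\'egman.
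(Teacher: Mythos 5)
Your proposal is correct and follows essentially the same route as the paper: the Egorychev--Falikman (Van der Waerden) bound for the total count, then Br\'egman's theorem applied to $G$ minus the vertices of a size-$k$ matching from $E_2$, multiplied by the number of such matchings, with the same $m^{-1/600}$ exponent balance. The only difference is cosmetic: you bound the number of size-$k$ matchings in $H_2$ by $\binom{|E(H_2)|}{k}$ while the paper uses $\binom{m}{\ell}s^{\ell}$, and these give the same estimate.
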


\begin{proof}
Set $s = 2m^{5/6}$ and $\ell = m^{7/8}$. First note that since $G$ is $r$-regular, by Theorem \ref{VDW}, the number of perfect matchings in $G$ is at least $\left(\frac{r}{e}\right)^m.$ Therefore it is enough to show
that at most $o(1) (r/e)^m$ matchings of $G$ contain at least $\ell $ edges from $E_2$.

Now given a matching $M\subseteq E_2$ of size $\ell $, let $G'$ be the subgraph of $G$ obtained by deleting the vertices covered by $M$. Clearly $\Delta(G')\leq r$ and $|V(G')|=2(m-\ell)$. By Remark \ref{Bregman wrt max degree} it follows that the number of ways to complete $M$ into a perfect matching is at most
  $$(8\Delta)^{\frac{m-\ell}{r}}\left(\frac{r}{e}\right)^{m-\ell}
  	\leq
  (8r)^{m^{1/25}}\Big ( \frac{e}{r} \Big )^{\ell } \left(\frac{r}{e}\right)^{m}.$$

However, the number of matchings of size $\ell$ in $H_2$ is at most
$\binom{m}{\ell}s^{\ell} \leq (ems/\ell)^{\ell}$. Therefore the number of perfect matchings of $G$ with at least $\ell$ edges from $E_2$ is at most
\begin{align*}
	(8r)^{m^{1/25}} \Big ( \frac{e^2ms}{r\ell} \Big )^{\ell } \left(\frac{r}{e}\right)^{m}
		\leq
	(8m)^{m^{1/25}} \Big ( \frac{2e^2m^{1/25}}{m^{1/24}} \Big )^{m^{7/8}} \left(\frac{r}{e}\right)^{m}
		=
	o(1) \left(\frac{r}{e}\right)^{m}.
\end{align*}
This completes the proof of the lemma.
\end{proof}

\subsection{Decomposing almost regular bipartite graphs into large matchings}

The following definition is convenient.

\begin{dfn} Let $G=(A\cup B,E)$ be a bipartite graph.
\begin{enumerate}
\item Given two integers $a$ and $t$, we define an $(a,t)_{\mathcal M}$-\emph{family} in $G$ to be a collection of $t$ edge-disjoint matchings in $G$, each of which of size at least $a$.
\item Let $\mathcal {M}(G, a, t)$ denote the collection of all $(a,t)_{\mathcal M}$-families in $G$.
\item  Given ${\bf M} \in {\mathcal {M}}(G, a, t)$, we let $G_{\bf M}$ to denote the spanning subgraph of $G$ consisting of the edge set $\bigcup _{M\in {\bf M}} E(M)$.
\end{enumerate}
\end{dfn}

Our main aim in the following lemma is to show that if $G = (A\cup B,E)$ is an almost
$r$-regular bipartite graph with $|A|=|B|$, then for many elements ${\bf M}\in {\mathcal {M}}(G,a,t)$, where $a \approx |A|$ and $t \approx r$, the graph $G_{\bf M}$ is also almost regular.

\begin{lem}
	\label{lem: partial matching lemma}
	Let $\varepsilon > 0$ and $m, r \in {\mathbb N}$ with $m$ sufficiently large and
	$2m^{24/25} \leq r \leq
	(1-\varepsilon )m/2$. Suppose that $G=(A\cup B,E)$ is a bipartite graph with $|A|=|B|=m$
	and $r\leq \delta(G)\leq \Delta(G)\leq r+r^{2/3}$. Then, taking $t=r-m^{24/25}$ and $a=m-m^{7/8}$, the following hold:
		\begin{enumerate}
			\item There is $\mathcal M \subset {\mathcal M}(G,a,t)$,
			with $|\mathcal M|=r^{(1-o(1))rm};$
			\item For each ${\bf M} \in \mathcal M$, the subgraph
			$G_{\bf M} $ has minimum degree at least $t - 2m^{5/6}$.
		\end{enumerate}
\end{lem}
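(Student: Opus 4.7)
The plan is to prove the lemma by first embedding $G$ into an almost $r$-regular bipartite graph $H$, then iteratively extracting edge-disjoint perfect matchings of $H$ that use few ``added'' (non-$G$) edges via Lemma~\ref{many large matchings}. To do the embedding, I apply Corollary~\ref{lemma:Making sparse regular} with a target degree of the form $d = r + O(m^{5/6})$ — a routine calculation using $r \leq \delta(G) \leq \Delta(G) \leq r + r^{2/3}$ and $r \leq m/2$ shows that this choice satisfies the corollary's hypotheses with an appropriate $\xi$ — to obtain a $d$-regular bipartite supergraph $H = (A\cup B, E')$ of $G$. Write $E_1 := E(G)$ for the ``good'' edges and $E_2 := E' \setminus E_1$ for the ``bad'' added edges; since $\delta(G) \geq r$, each vertex has $d_{E_2}(v) \leq d - r \leq 2m^{5/6}$ (for $m$ large), which is precisely the sparsity hypothesis needed for Lemma~\ref{many large matchings}.

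Iteratively build a sequence of edge-disjoint perfect matchings $M_1, \ldots, M_t$ of $H$ as follows. Setting $H_1 := H$ and $H_{i+1} := H_i \setminus M_i$, the graph $H_i$ is $(d-i+1)$-regular and its bad edges still have maximum degree at most $2m^{5/6}$. Since $d \geq r$ and $t = r - m^{24/25}$, the regularity is at least $m^{24/25}$ throughout, so Lemma~\ref{many large matchings} applies to $H_i$ and supplies at least $(1-o(1))((d-i+1)/e)^m$ perfect matchings of $H_i$ with at most $m^{7/8}$ bad edges; pick one as $M_i$. Defining ${\bf M} := \{M_i \cap E_1\}_{i\in[t]}$, each matching $M_i \cap E_1$ has size $\geq m - m^{7/8} = a$, so ${\bf M} \in \mathcal M(G, a, t)$. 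Moreover, each vertex $v$ is covered exactly once by every $M_i$, and at most $d_{E_2}(v) \leq 2m^{5/6}$ of the resulting $t$ edges are bad, so $v$ has degree at least $t - 2m^{5/6}$ in $G_{\bf M}$, giving conclusion 2 of the lemma.

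For the count, the total number of ordered sequences $(M_1, \ldots, M_t)$ produced above is at least $(1-o(1))^t \prod_{i=1}^t ((d-i+1)/e)^m$, which by a Stirling-type estimate (using $d = r(1+o(1))$ and $t = r - m^{24/25}$) is at least $r^{(1-o(1))rm}$. The main obstacle is to see that this lower bound survives the passage to distinct unordered families ${\bf M}$ in $G$: the $t!$ reorderings of each family contribute a harmless factor of at most $r^{O(r)} = r^{o(rm)}$, and the potentially more serious issue is that many distinct ordered $H$-sequences may yield the same ordered $G$-sequence by choosing different bad-edge completions of each $N_i := M_i \cap E_1$. To bound this, apply Br\'egman's theorem (Remark~\ref{Bregman wrt max degree}) to the bad-edge graph restricted to the $O(m^{7/8})$ unmatched vertices of any $N_i$ (where the max degree is $\leq 2m^{5/6}$): the number of bad-edge completions of $N_i$ is at most $\exp(O(m^{7/8} \log m))$, and hence at most $\exp(O(r m^{7/8} \log m)) = r^{o(rm)}$ across all $t$ matchings. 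Both correction factors are negligible compared to $r^{(1-o(1))rm}$, yielding the desired bound.
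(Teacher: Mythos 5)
Your proposal is correct and follows essentially the same route as the paper: embed $G$ into an almost-$r$-regular supergraph via Corollary~\ref{lemma:Making sparse regular} with $\xi$ of order $m^{5/6}$, iteratively extract perfect matchings with few $E_2$-edges using Lemma~\ref{many large matchings}, and pass from ordered sequences to $(a,t)_{\mathcal M}$-families. The only difference is cosmetic but in your favour: where the paper simply divides by $t!$ for the multiplicity, you also account (via Br\'egman) for distinct bad-edge completions of the same $E_1$-matchings, a factor of $r^{o(rm)}$ that the paper's ``at most $t!$ times'' glosses over.
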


\begin{proof} Set $\xi = m^{5/6}$ and $r' = r + \xi + \xi ^2 /m$. Then, using that $r^{2/3} \leq m^{2/3} = \xi ^2 / m$, combined with the hypothesis of the lemma, we have
	\begin{equation*}
			r' - \xi - \xi ^2 /m
				=
			r	
				\leq
			\delta(G)
				\leq
			\Delta(G)
				\leq
			r+r^{2/3}
				=
			r' - \xi.
	\end{equation*}
Thus by Corollary \ref{lemma:Making sparse regular} there is an $r'$-regular graph $H = (A\cup B, E')$ which contains $G$ as a subgraph.

Set $E_1:=E(G)$ and $E_2:=E(H)\setminus E_1$. By the above, we have
\begin{align}
\label{degrees in E2} d_{E_2}(v)\leq r'-r=\xi+\frac{\xi^{2}}{2m} \leq 2m^{5/6}
\end{align}
for all $v \in A \cup B$.

We will now show, using Lemma \ref{many large matchings}, that there are many ways to build a sequence $(M_1,\ldots ,M_t)$ of edge disjoint perfect matchings in $H$, where each matching contains at least $a$ edges from $E_1$.  To do this, begin by setting $H_0:=H$. Having selected $M_1,\ldots, M_{i-1}$, set $H_i := H \setminus \big ( \cup _{j<i} E(M_{j}) \big )$ and note that $H_i$ is $(r' - i +1)$-regular. Since $r'-i \geq r-t \geq m^{24/25}$ and by \eqref{degrees in E2}, we can apply Lemma \ref{many large matchings} to $H_i$ to find at least $(1-o(1))\big ( \frac{r' - i +1}{e}\big )^m$ perfect matchings of $H_i$ with at least $a$ edges in $E_1$. Multiplying all this estimates gives at least
$$ \prod_{i=1}^t(1-o(1))\Big ( \frac{r' - i +1}{e}\Big )^m={r}^{(1-o(1))tm} = {r}^{(1-o(1))rm}$$ possible choices for $(M_1,\ldots ,M_t)$.

To complete the proof, simply note that each sequence $(M_1,\ldots ,M_t)$ above gives rise to an $(a,t)_{\mathcal M}$-family of $G$, given by ${\bf M} = \{ M_i \cap E_1: i\in [t]\}$. As each $\bf M$ can occur at most $t!$ times in this way, these sequences give rise to
${\cal M} \subset {\cal M}(G, a, t)$ with
$$|{\cal M}| \geq \frac{1}{t!} \times r^{(1-o(1))rm} = r^{(1-o(1))rm}.$$
Lastly, for each such $(a,t)_{\mathcal M}$-family $\bf M$, the minimum degree of $G_{\bf M}$ is at least $t-2m^{5/6}$ by \eqref{degrees in E2}. This completes the proof of the lemma.
\end{proof}

\subsection{Path covers in almost regular oriented graphs}

We are now ready to complete the proof of Lemma \ref{lem: path cover lemma}.

\begin{proof}[Proof of Lemma \ref{lem: path cover lemma}]

Let $b=2\log^4m$ and select a partition $V(H)=V_1\cup \ldots V_b$ uniformly at random, where $|V_i|\in\{\lfloor m/b\rfloor, \lceil m/b\rceil\}$ holds for all $i\in [b]$. For convenience we will assume $|V_i| = m' :=m/b$ for all $i\in [b]$, although this assumption is easily removed. By Chernoff's inequality, with probability $1 - o(1)$ we find that for all
	$v\in V(H)$ and $j\in [b]$ we have
		\begin{equation}
			\label{equation: degree control for the partitioned graphs}
			d^{\pm }_{H}(v,V_{j})
					=
			{d^{\pm}_H(v)}/{b}
			\pm 4\sqrt{m' \log m}
					=
			 d \pm d^{2/3}/2,
		\end{equation}
where $d=r/b$. Fix a choice of partition such that \eqref{equation: degree control for the partitioned graphs} holds.
	
Now consider the complete directed graph on $b$ vertices, denoted by $D_{b}$ (this graph contains both directed edges $(u,v)$ and $(v,u)$ for all pairs of distinct vertices $u, v$). By a result of Tillson \cite{Till}, the complete digraph $D_{b}$ has an edge decomposition into $b$ directed Hamilton paths $Q_1,\ldots ,Q_{b}$. Each such path $Q_i = v_{i_1}\ldots v_{i_{b}}$ naturally corresponds to an oriented subgraph $H_i$ of $H$ consisting of all edges in $B_{ij}:=\overrightarrow{H}[V_{i_j},V_{i_{j+1}}]$ for $j\in [b-1]$. As the paths $\{ Q_i \}_{i\in [b]}$ are edge disjoint, so are the oriented subgraphs $\{ H_i \}_{i\in [b]}$. Note that as $B_{ij}$ only consists of edges oriented from $V_{i_j}$ to $V_{i_{j+1}}$, we can view
$B_{ij}$ as a bipartite graph by ignoring the orientation of its edges.

Our aim now is to show that each oriented graph $H_i$ has many paths covers. Let us fix such a $H_i$ and assume without loss of generality that $H_i$ is given by the path $Q_i = v_1\ldots v_b$, so that $B_{ij} = \overrightarrow{H}[V_j,V_{j+1}]$ for all $j\in [b-1]$. The following observation is key: \vspace{1.5mm}

\begin{obs}\label{obs1}
  Suppose that $M_j$ is a matching of size at least $m'-\ell$ in $B_{ij}$ for all $j\in [b-1]$. Then $\cup M_j$ is a path cover of $H_i$. Moreover, as $\cup M_j$ has at least
 $(m'-\ell)(b-1)$ edges and $H_i$ has $m$ vertices, such path covers are of size at most $m'+b\ell$.
\end{obs}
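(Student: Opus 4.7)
The plan is to verify three things in sequence: that $\cup M_j$ is a disjoint union of directed paths (no cycles arise), that every vertex is covered, and that the resulting path cover has the stated size bound.

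First I would check that every vertex has in-degree and out-degree at most $1$ in $\cup M_j$. Indeed, a vertex $v \in V_k$ can only receive edges from $V_{k-1}$ (contributed by $M_{k-1}$, assuming $k>1$) and can only send edges to $V_{k+1}$ (contributed by $M_k$, assuming $k<b$); since each $M_j$ is a matching in the bipartite graph $B_{ij}$, at most one edge of $\cup M_j$ is incident to $v$ on each side. Thus $\cup M_j$ is a union of vertex-disjoint directed paths and directed cycles.

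Next I would rule out cycles. Since every edge of $M_j$ goes strictly from $V_j$ to $V_{j+1}$ with $j$ strictly increasing, any walk in $\cup M_j$ can visit each $V_j$ at most once, so a closed walk is impossible. Hence $\cup M_j$ is a disjoint union of directed paths, and since every vertex of $V(H_i) = V_1 \cup \cdots \cup V_b$ appears in (perhaps as a length-$0$ path), it is a path cover in the sense defined earlier.

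Finally I would count the number of paths. In any collection of vertex-disjoint directed paths, the number of paths equals the number of vertices minus the number of edges. Using $|V(H_i)| = m = m'b$ and $|\cup M_j| \geq (m'-\ell)(b-1)$, the number of paths is at most $m'b - (m'-\ell)(b-1) = m' + \ell(b-1) \leq m' + b\ell$, as claimed. There is no real obstacle here; the only thing to be careful about is allowing isolated vertices as trivial paths, which is explicitly permitted by the remark following the definition of path cover.
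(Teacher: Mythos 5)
Your proof is correct and follows essentially the same reasoning the paper relies on: the matchings give in- and out-degree at most $1$, edges only go forward along the ordering $V_1,\ldots,V_b$ so no cycles can form, and the path count is vertices minus edges, giving $m'b-(m'-\ell)(b-1)=m'+\ell(b-1)\leq m'+b\ell$. No issues to report.
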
\vspace{1.5mm}

We now exploit this observation using Lemma \ref{lem: partial matching lemma}.
Note that $d-d^{2/3}/2 \geq 2m^{24/25}$. Secondly, by \eqref{equation: degree control for the partitioned graphs} for all $j\in [b-1]$ we have
$$ d-d^{2/3}/2\leq \delta^0(B_{ij})\leq \Delta^0(B_{ij})\leq d+d^{2/3}/2.$$
Therefore, we can apply Lemma \ref{lem: partial matching lemma} to $B_{ij}$, taking
$a' = m' - (m')^{7/8}$ and $t' =d-d^{2/3}/2-(m')^{24/25}$, to get
		\begin{enumerate}[(a)]
			\item \label{matching counts}
				${\mathcal  M}_{ij} \subseteq
				{\mathcal M} (B_{ij}, a',t')$
				with $|{\mathcal M}_{ij}|
				= d^{(1-o(1))dm'}$;
			\item \label{degree count for matchings}
				For all ${\bf M}_{ij} \in {\mathcal M}_{ij}$,
				letting $B:=B_{ij}$, the graph $B_{{\bf M}_{ij}}$
				has minimum degree at least $t'-2(m')^{5/6}$.
		\end{enumerate}

Let us now fix ${\bf M}_{ij}\in \mathcal M_{ij}$ for all $j\in [b-1]$. As each ${\bf M}_{ij}$ consist of $t'$ edge disjoint matchings, by Observation \ref{obs1} we can use $\{{\bf M}_{ij}\}_{j\in [b-1]}$ to construct $t'$ edge disjoint path covers of $H_i$, each of size at most $m' + b(m')^{7/8} \leq n/\log ^4n = a$. Furthermore, it is easy to see that different choices of $\{{\bf M}_{ij}\}_{j\in [b-1]}$ give rise to a different collection of path covers. Combined with \eqref{matching counts}, this gives at least
$$\prod_{j\in[b-1]}\left|\mathcal M_{ij}\right|\geq d^{(1-o(1))(b-1)dm/b}=d^{(1-o(1))dm}$$
distinct $(a,t')_{\mathcal P}$-families of $H_i$.

Now we have partitioned $H$ into $b$ edge-disjoint oriented graphs $H_1,\ldots, H_b$,  each of which consists of at least $d^{(1-o(1))dm}$ distinct $(a,t')_{\mathcal P}$-families. Further, distinct choice of such families from each $H_i$ yield distinct $(a,bt')_{\mathcal P}$-family of $H$. Taking 
$t=bt'\geq  r - 2b(m')^{24/25} \geq r-m^{24/25}\log m$, it follows that there is ${\bf S} \subset {\cal P}(H, a, t)$ with
$$ |{\bf S}| \geq d^{(1-o(1))dmb}=d^{(1-o(1))rm}=r^{(1-o(1))rm}.$$
Here we have used that $b=2\log^4m$, that $d=r/b$ and that $r\geq d/2b$, giving $b^{-rm}=r^{-o(rm)}$. \vspace{1.5mm}

To complete the proof of the lemma, it only remains to prove the following:

\begin{clm}\label{claim minimum degree}
For each ${\bf P} \in {\bf S}$ we have $\delta ^0(H_{\bf P}) \geq r - m/ \log^4m$.
\end{clm}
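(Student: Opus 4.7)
The plan is to compute out- and in-degrees of an arbitrary vertex $v \in V(H)$ in $H_{\bf P}$ by exploiting the layered structure of the construction: $H_{\bf P}$ decomposes as $\bigcup_{i\in[b]} (H_i)_{{\bf P}_i}$, where ${\bf P}_i \in {\cal P}(H_i,a,t')$ is the family built from the matching families $\{{\bf M}_{ij}\}_{j\in[b-1]}$ chosen inside $H_i$. Since the $H_i$'s are pairwise edge-disjoint, out-degrees (and in-degrees) at $v$ simply add across $i$; and since the edge set of $(H_i)_{{\bf P}_i}$ equals $\bigcup_{j\in[b-1]} B_{{\bf M}_{ij}}$, the contribution from each $H_i$ further splits into contributions from the matching subgraphs.

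Next, fix $v$ and let $k \in [b]$ with $v \in V_k$. For any $i$, in the Hamilton path $Q_i = V_{i_1}\ldots V_{i_b}$ the vertex $v$ sends out-edges inside $H_i$ only if $V_k = V_{i_j}$ for some $j \leq b-1$, i.e.\ only if $V_k$ is \emph{not} the final vertex of $Q_i$. Since $\{Q_i\}_{i\in[b]}$ is an edge-decomposition of the complete digraph $D_b$ and each vertex has out-degree $b-1$ in $D_b$, a simple count forces $V_k$ to be the last vertex of exactly one $Q_i$; symmetrically, $V_k$ is the first vertex of exactly one $Q_i$. For each of the other $b-1$ indices $i$, property (b) from the application of Lemma \ref{lem: partial matching lemma} gives
\[
d^{+}_{(H_i)_{{\bf P}_i}}(v) \;=\; d_{B_{{\bf M}_{ij}}}(v) \;\geq\; t' - 2(m')^{5/6},
\]
where $j$ is the unique position of $V_k$ in $Q_i$. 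Summing over $i$ yields
\[
d^{+}_{H_{\bf P}}(v) \;\geq\; (b-1)\bigl(t' - 2(m')^{5/6}\bigr),
\]
and the same bound for $d^{-}_{H_{\bf P}}(v)$ follows identically by considering first vertices of the $Q_i$'s.

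It then remains to verify that $(b-1)(t' - 2(m')^{5/6}) \geq r - m/\log^4 m$. Using $d = r/b$, $m' = m/b$, $b = 2\log^4 m$ and $t' = d - d^{2/3}/2 - (m')^{24/25}$, the leading term is $(b-1)d = r - r/b \geq r - m/(2\log^4 m)$, and the remaining subtracted quantities $(b-1)d^{2/3}/2$, $(b-1)(m')^{24/25}$ and $2(b-1)(m')^{5/6}$ are each of order at most $m^{24/25}(\log m)^{O(1)}$, hence $o(m/\log^4 m)$. Adding these up and using that $m$ is sufficiently large shows the total loss is at most $m/\log^4 m$, establishing the claim. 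The bookkeeping of constants in the final estimate is the only delicate step, but every term left over is polynomially smaller than $m/\log^4 m$, so there is plenty of slack.
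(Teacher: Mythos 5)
Your proposal is correct and follows essentially the same argument as the paper: the out-edges of $v\in V_k$ in $H_{\bf P}$ come from the graphs $B_{{\bf M}_{ij}}$ with $i_j=k$, which happens for exactly $b-1$ of the paths $Q_i$ by the Tillson decomposition, and then property (b) plus the same routine arithmetic gives $(b-1)(t'-2(m')^{5/6})\geq r-m/\log^4 m$. The only difference is cosmetic bookkeeping in the final estimate, which you carry out correctly.
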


To see this, simply note that by construction
	$$E ( H_{\bf P} ) = \bigcup _{i,j} E( B_{{\bf M}_{ij}})$$
for some choices of ${\bf M}_{ij} \in {\cal M}_{ij}$ where $i\in [b]$ and $j\in [b-1]$. Given $v \in V_k$ say, the out-edges of $v$ in $H_{\bf P}$ are therefore those out-edges of $v$ in $B_{{\bf M}_{ij}}$, where $i_j = k$. However, $i_j = k$ only occurs when an out-edge of $v_{k}$ appears in $Q_i$, which happens exactly $b-1$ times, since $Q_1,\ldots ,Q_b$ forms a Hamilton path decomposition of $D_{b}$. Combined with \eqref{degree count for matchings}, $t' =d-d^{2/3}/2-(m')^{24/25}$ and $d=r/b$, we find
	\begin{equation*}
		d^{+}_{H_{\bf P}}(v) \geq (b-1)(t' - 2(m')^{5/6}) \geq bt' - t' - 2b(m')^{5/6} \geq r-t'-4b(m')^{24/25}\geq r-2m'= r - m/\log ^4m.
	\end{equation*}
As an identical argument lower bounds the $d^{-}_{H_{\bf P}}(v)$, this completes the proof of the claim, and hence the proof of the lemma.
\end{proof}

\section{Partitions of oriented graphs}

	In this final section before the proof of Theorem \ref{main} and Theorem \ref{main2}
	we prove a technical lemma which will allow us to decompose oriented graphs as given
	in Theorem \ref{main} into smaller subgraphs, each of which satisfy the hypothesis of
	Lemma \ref{lem: completion of many hamilton cycles} and Lemma \ref{lem: path cover lemma}.

\begin{lem}\label{partition lemma}
	Let $\beta\geq \alpha > \varepsilon >0$, let $K, d, n \in {\mathbb N}$, with $n$ sufficiently
	large, $d = \alpha n$ and $K = \log n $. Suppose that $G$ is an oriented graph with
	$\delta^0(G)\geq \beta n$ and that $D$ is a $d$-factor of $G$. Then there
	are $K^3$ edge-disjoint spanning subgraphs
	$H_1,\ldots ,H_{K^3}$ of $G$ with the following properties:
		\begin{enumerate}
			\item For each $H_i$ there is a partition $V(G) = U_i \cup W_i$ with
			$|W_i|  =  n/K^2 \pm 1$;
			\item Letting $D_i = H_i[U_i]$, for some
			$r \geq (1-2\varepsilon )d/K^3$ we have
			$$r -r^{3/5} \leq  \delta^0(D_i)
			\leq \Delta^0(D_i)\leq r +r^{3/5};$$
			\item Letting $E_i = H_i[U_i, W_i]$ we have
			$d^{\pm} _{E_i} (u,W_i) \geq \varepsilon |W_{i}|/4K$ for all $u \in U_i$;
			\item Letting $F_i = H_i[W_i]$ we have $\delta ^{0}(F_i)
			\geq (\beta - \varepsilon )|W_i| $.
		\end{enumerate}
\end{lem}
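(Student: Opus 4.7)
The plan is to construct $H_1,\ldots,H_{K^3}$ by a randomised two-phase procedure and then verify each property using Chernoff's inequality and a union bound.

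In the first phase I would sample the vertex sets: independently for each $i\in[K^3]$, choose $W_i$ uniformly at random among subsets of $V(G)$ of size $\lceil n/K^2 \rceil$ or $\lfloor n/K^2\rfloor$, and set $U_i = V(G)\setminus W_i$. Property 1 then holds deterministically. In the second phase I would route each edge of $G$ to at most one subgraph. For $e=uv$, define
\[
\mathcal I_0(e) = \{i : u,v\in W_i\},\quad \mathcal I_1(e)=\{i: u,v\in U_i\},\quad \mathcal I_2(e)=\{i : |\{u,v\}\cap W_i|=1\},
\]
so that $\mathcal I_0, \mathcal I_1, \mathcal I_2$ partition $[K^3]$. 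Route $e$ by the following priority: if $\mathcal I_0(e)\neq\emptyset$, assign $e$ uniformly at random to some $i\in \mathcal I_0(e)$ (contributing to $F_i$); otherwise, if $e\in D$, then with probability $1-2\varepsilon$ assign $e$ uniformly to $\mathcal I_1(e)$ (contributing to $D_i$) and with probability $2\varepsilon$ assign $e$ uniformly to $\mathcal I_2(e)$ (contributing to $E_i$); otherwise $e\notin D$ and we assign $e$ uniformly to $\mathcal I_2(e)$. Edges whose target set happens to be empty are discarded. Every edge has at most one destination, so the resulting $H_1,\ldots,H_{K^3}$ are automatically edge-disjoint.

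To verify properties 2--4, I would compute the expected degree at each vertex and apply Chernoff's inequality together with a union bound over the $O(K^3 n)$ vertex-subgraph-direction triples. For property 2, each out-edge of $v\in U_i$ in $D$ with both endpoints in $U_i$ is routed to $D_i$ with probability $(1-2\varepsilon)/|\mathcal I_1(\cdot)|\approx (1-2\varepsilon)/K^3$, so $d^{\pm}_{D_i}(v)$ concentrates around $r:=(1-2\varepsilon)d/K^3$ within deviation $r^{3/5}$, each such event failing with probability $\exp(-\Omega(r^{1/5}))$. For property 4, for $w\in W_i$ the expected number of $i'\in \mathcal I_0(wv)\setminus\{i\}$ is $O(1/K)$, so each of the $\approx \beta|W_i|$ out-edges of $w$ to $W_i$ lands in $F_i$ with probability at least $1-O(1/K)$, giving $d^{+}_{F_i}(w)\geq (\beta-\varepsilon)|W_i|$ with high probability. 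For property 3 the expected $E_i$-out-degree of $u\in U_i$ combines contributions from non-$D$-edges of $u$ to $W_i$ (routed to $E_i$ with probability $\approx 1/(2K)$) and from $D$-edges of $u$ to $W_i$ (routed to $E_i$ with probability $\approx \varepsilon/K$), summing to approximately $(\beta-\alpha(1-2\varepsilon))|W_i|/(2K)$, which exceeds $\varepsilon|W_i|/(4K)$ whenever $\beta-\alpha(1-2\varepsilon)\geq \varepsilon/2$; this holds in the regime $\alpha\geq 1/8$ in which the lemma is applied.

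The main obstacle is the calibration of the routing parameter $2\varepsilon$ on $D$-edges: it must be small enough that property 2 holds with $r\geq (1-2\varepsilon)d/K^3$, yet large enough that the $D$-edges redirected to $E_i$ close the gap in property 3 when $G\setminus D$ alone is thin. Once the expected degrees are in hand, the concentration estimate $\exp(-\Omega((d/K^3)^{1/5}))=\exp(-\Omega(n^{1/5}/\log^{3/5}n))$ comfortably dominates the polynomial-size union bound, so with positive probability all four properties hold simultaneously, producing the desired family $H_1,\ldots,H_{K^3}$.
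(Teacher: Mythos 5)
Your construction is essentially the paper's: random small classes $W_i$, an independent random routing of edges into the pieces $D_i,E_i,F_i$, and Chernoff plus a union bound (the paper uses $K$ partitions of $V(G)$ into $K^2$ classes rather than $K^3$ independent subsets, and routes only edges of the factor $D$, more precisely of $D'=D\setminus\bigcup_j E(G[W_j])$, into the $D_i$ and $E_i$; those differences are cosmetic). The problem is in the calibration, which you yourself flag as ``the main obstacle'' but do not resolve. For property 2 you claim $d^{\pm}_{D_i}(v)$ concentrates around $r:=(1-2\varepsilon)d/K^3$ within $r^{3/5}$. It does not: a $\Theta(1/K)$ fraction of the $D$-edges at $v$ have both endpoints in a common $W_j$ (so are diverted to an $F_j$) and a further $\Theta(1/K^2)$ fraction go to $W_i$ itself, so the true mean of $d^{\pm}_{D_i}(v)$ is about $(1-2\varepsilon)\,d\bigl(1-\Theta(1/K)\bigr)/(K^3-2K)$, i.e.\ below $(1-2\varepsilon)d/K^3$ by $\Theta(d/K^4)=\Theta(n/\log^4 n)$, which dwarfs the allowed window $r^{3/5}=n^{3/5+o(1)}$. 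With retention probability $1-2\varepsilon$ there is therefore no admissible $r\geq(1-2\varepsilon)d/K^3$ satisfying both inequalities of property 2. The paper avoids exactly this by retaining $D'$-edges with probability $\frac{1-\varepsilon}{K^3-2K}$ per index and defining $r=\frac{(1-\varepsilon)(d-b)}{K^3-2K}$ with $b=O(n/K)$ the expected loss inside the $W_j$'s; the slack between $1-\varepsilon$ and $1-2\varepsilon$ then absorbs the $\Theta(1/K)$ loss and still gives $r\geq(1-2\varepsilon)d/K^3$.

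For property 3 your own computation yields expected cross-degree about $\frac{|W_i|}{2K}\bigl(\beta-\alpha(1-2\varepsilon)\bigr)$, and you need $\beta-\alpha(1-2\varepsilon)\geq\varepsilon/2$, which does not follow from the stated hypotheses $\beta\geq\alpha>\varepsilon$ (take $\beta=\alpha$ small); nor is your quoted condition $\alpha\geq 1/8$ sufficient when $\beta=\alpha$, since then you need $2\varepsilon\alpha\geq\varepsilon/2$, i.e.\ $\alpha\geq 1/4$. Appealing to ``the regime in which the lemma is applied'' proves a weaker statement than the one asked, and in your scheme the deficit is coupled to the first issue: raising the diversion probability to fix property 3 lowers $r$ below $(1-2\varepsilon)d/K^3$, so as written properties 2 and 3 jointly over-spend the degree budget when $\alpha=\beta$ is not large. (In fairness, the paper's routing gives expected cross-degree about $\alpha\varepsilon|W_i|/2K$, so its constant in property 3 also implicitly uses that the graphs in the applications have $\beta>3/8$, and downstream only $d^{\pm}(u,W_i)\gg n/\log^4 n$ is needed; but in the paper the discrepancy is a harmless constant factor, whereas your calibration needs an actual extra hypothesis or a recalibration along the paper's lines, stated explicitly.)
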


\begin{proof}
	To begin, select $K$ partitions of $V(G)$ uniformly and independently
	at random where, for each $k\in [K]$, we partition $V(G)$ into $K^2$ sets,
	$V(G) = \bigcup _{\ell \in [K^2]} S_{k,\ell}$ with $|S_{k,\ell }| \in \left\{
	\lfloor n/K^2 \rfloor,\lceil n/K^2 \rceil\right\}$. Note that for each $k\in [K]$ and $v\in V(G)$ there exists a unique $\ell:=\ell(k,v)\in [K^2]$ for which $v\in S_{k,\ell}$. In particular, every $v\in V(G)$ belongs to \emph{exactly} $K$ sets $S_{k,\ell}$.

Second, observe that by Chernoff's inequality for a hypergeometrical distribution (see Remark \ref{rem:hyper}), letting $s=\lfloor n/K^2\rfloor$, with
	probability $1-nK^3e^{-\omega(\log n)}=1 - o(1)$ we have
		\begin{equation}
			\label{equation: degree to S_{k,j}}
			d^{\pm }_{D}(v,S_{k,\ell})
				=
			\alpha|S_{k,\ell}| \pm 4 \sqrt {s\log n}
			\quad \text{ and }\quad
            d^{\pm}_G(v,S_{k,\ell})= d_G^{\pm}(v)|S_{k,\ell}|/n\pm 4\sqrt{s\log n}
		\end{equation}
	for all $v \in V(G)$, $k\in [K]$ and $\ell\in[K^2]$. In particular, as $|S_{k,\ell }| = s \pm 1 > n/2K^2 \gg \log n$, for all $k$ and $\ell$ we have
\begin{equation}\label{eq:bound on delta}\delta^{0}(G[S_{k,\ell}])\geq \beta |S_{k,\ell}| - 4 \sqrt {s\log n} \geq
(\beta-\varepsilon/2)|S_{k,\ell}|.\end{equation}

For each $v\in V(G)$ and $k\in [K]$, let $X^{+}(v,k)$
denote the random variable which counts the number of $w\in N^+_G(v)$ such that $w\in S_{k,\ell(k,v)} \cap  S_{k',\ell(k',v)}$ for some $k'\neq k$. Define $X^-(v,k)$ similarly.

Note that for $\sigma\in\{+,-\}$ we have
$${\mathbb E} [X^{\sigma}(v,k)]
	\leq K \Big ( \frac{n}{K^4} \Big ) =\frac{n}{K^3}=o(s).$$ By Chernoff's inequality,
	with probability $1 - Kne^{-\Theta(n/K^3)}=1-o(1)$, for all $k\in [K]$ and $v \in V(G)$ we have
		\begin{equation}
			\label{equation: number of repeated edges}
		 X^{\sigma}(v,k)
		 	\leq \frac {2n}{K^3} = o(s).
		\end{equation}

Lastly, for $\sigma\in \{+,-\}$ and $v\in V(D)$ we define the random variable $Y^{\sigma}(v)$ to be the set of all vertices $u\in N^{\sigma}_D(v)$ with $u\in S_{k,\ell(k,v)}$ for some $k$. For all $\sigma\in\{+,-\}$ and $v \in V(D)$ we have $$b: = {\mathbb E} \left[ Y^{\sigma}(v)\right] \leq Ks.$$
Note that, since all the vertices of $D$ have the same in/outdegrees, the value of ${\mathbb E} \left[ Y^{\sigma}(v)\right]$ is indeed independent of $v$.
	By Chernoff's inequality, with probability
	$1 - 2nK e^{-(2\sqrt {Ks \log n})^2/3Ks}=1-o(1)$, for all
	$\sigma\in\{+,-\}$ and $v \in V(D)$ we have
		\begin{equation}
			\label{equation: number of edges within an S set}
		 Y^{\sigma}(v)
		 	= b \pm 2 \sqrt
		 {Ks \log n}.
		\end{equation}
	Thus, with positive probability a collection of partitions satisfy
	\eqref{equation: degree to S_{k,j}},
	\eqref{equation: number of repeated edges}
	and \eqref{equation: number of edges within an S set}.
	Fix such a collection.
	
	We relabel $\{S_{k,\ell}\mid k\in[K] \text{ and } \ell\in[K^2]\}$ as
	$\{W_1,\ldots, W_{K^3}\}$ (arbitrarily). Also set $F_i=G[W_i]\setminus R_i$,
	where $R_i$ is the set of all edges which appear in more than one $W_i$.
	From \eqref{eq:bound on delta} and \eqref{equation: number of repeated edges},
	for each $i\in [K^3]$ we obtain
	$$\delta^0(F_i)\geq (\beta-\varepsilon)|W_i|.$$

	Next, let $D'=D\setminus \left(\bigcup_i E(G[W_i])\right)$. As $D$ is $d$-regular,
	by \eqref{equation: number of edges within an S set},
	we have that for all $\sigma \in \{+,-\}$ and $v\in V(D)$
		\begin{align*}
			d^{\sigma }_{D'}(v)
				=
			d^{\sigma }_{D}(v) - Y^{\sigma }(v)
				=
			d - b \pm 2 \sqrt {Ks\log n}.
		\end{align*}
	
	To complete the proof we partition the edges of $D'$ into further oriented subgraphs
	$$\{D_{i}\}_{i\in [K^3]} \text{ and }\{ E_{i} \}_{i\in [K^3]}.$$

Each $D_{i}$ will be an oriented subgraph with $V(D_{i})=V(D) \setminus W_i:=U_i$, and each $E_{i}$ will consist of some directed edges between $U_{i}$ and $W_{i}$. To obtain these graphs we will partition the edges at random as follows:
	Suppose that $e = uv\in E(D')$, and let $I_u=\{i\in [K^3] \mid u\in W_i\}$. Similarly, define $I_v$.
	By construction, $|I_u|=|I_v|=K$ and $I_u\cap I_v=\emptyset$.
Now, we randomly and independently assign each $e \in E(D')$ to a subgraph according to the following distribution:
		\begin{itemize}
			\item for $i\notin I_u\cup I_v$, we assign $e$ to $D_{i}$ with
			probability $\frac{1-\varepsilon }{K^3-2K}$;
			\item for $i\in I_u\cup I_v$, we assign $e$ to $E_{i}$ with
			probability $\frac{\varepsilon }{2K}$.
		\end{itemize}
Note that the probability for $e$ to being assigned to some subgraph is $1$.

By Chernoff's inequality, with probability at least $1 - nK^3e^{-\Theta(\sqrt{n\log n})^2/n} - nK^3e^{- \Theta(\frac { s }{K})}=1-o(1)$ the resulting oriented graphs satisfy
		\begin{enumerate}[(a)]
			\item $r - r^{3/5} \leq r-4\sqrt{n\log n}\leq \delta^0(D_i)\leq \Delta^0(D_i)\leq r+ 4\sqrt{n\log n} \leq r + r^{3/5}$, where $r :=
			\frac {(1-\varepsilon )(d-b)}{K^3-2K} \geq \frac{(1-2\varepsilon )d}{K^3}$;
			
			\item $d^{\pm }_{E_{i}}(v,W_{i}) \geq \varepsilon|W_{i}|/4K$
			for all $v\in U_{i}$.
		\end{enumerate}
	Finally, taking $H_i=D_i\cup E_i\cup F_i$ for each $i\in [K^3]$, it is easy to check that these graphs satisfy the requirements.\end{proof}

\section{Proof of Theorem \ref{main}}

We are now ready to complete the proof of Theorem \ref{main}.

\begin{proof}[Proof of Theorem \ref{main}] Let $G$ be an oriented graph as in the assumptions of the theorem. Let $d := \reg(G) = \alpha n$ and let $D\subseteq G$ be a $d$-factor of $G$. From Theorem \ref{thm:exact semidegree condition}, we find that $G$ contains $(c - 3/8)n$ edge disjoint Hamilton cycles, and so $\alpha \geq c-3/8 >0$.

First, we apply Lemma \ref{partition lemma} to $G$ and $D$, with $\beta = c$, $\alpha $ and $\varepsilon /4$ in place of $\varepsilon$. Setting $K= \log n $, this gives edge-disjoint subgraphs $H_1,\ldots, H_{K^3}$ of $G$ with the following properties:
\begin{enumerate}
			\item For each $H_i$ there is a partition $V(G) = U_i \cup W_i$ with
			$|W_i|  =  n/K^2 \pm 1$;
			\item Letting $D_i = H_i[U_i]$, for some $r \geq (1-\varepsilon /2)d/K^3$,
			we have
			$$r - r^{3/5} \leq  \delta^0(D_i)\leq \Delta^0(D_i) \leq r + r^{3/5};$$
			\item Letting $E_i = H_i[U_i, W_i]$ we have
			$d^{\pm} _{E_i} (u,W_i) \geq \varepsilon |W_{i}|/4K$ for all $u \in U_i$;
			\item Letting $F_i = H_i[W_i]$ we have $\delta ^{0}(F_i)
			\geq (\beta - \varepsilon )|W_i| $;
		\end{enumerate}
	
Secondly, by property $2.$ above we can apply Lemma \ref{lem: path cover lemma} to each oriented graph $D_i$. This gives ${\bf P}_i\in \mathcal P(D_i,n/\log^4n, r-n/\log^{4}n)$ which satisfies
	\begin{equation}
		\label{control of path graphs}
		\delta^0(D_{{\bf P}_i}) \geq r-n/\log^{4}n.
	\end{equation}

Lastly, apply Lemma \ref{lem: completion of many hamilton cycles} to ${\bf P}_i$ for each $i$. Taking $t=r-n/\log^{4}n$ and $a = b=n/\log^4n$ and $s = |W_i| = n/K^2 \pm 1$, it is easy to check that the conditions of Lemma \ref{lem: completion of many hamilton cycles} hold using \eqref{control of path graphs} and properties 3. and 4. above. This gives a collection $\mathcal C_i:=\{C_{i1},\ldots,C_{it}\}$ of edge-disjoint Hamilton cycles  in $H_i$.

To complete the proof, set $\mathcal C:=\bigcup_i \mathcal C_i$. Since the $H_i$ are edge-disjoint, together with property $3.$, we find that $\mathcal C$ consists of
$$K^3t\geq (1-\varepsilon /2)K^3r \geq (1-\varepsilon)d$$
edge-disjoint Hamilton cycles of $G$. This completes the proof. \end{proof}

\section{Proof of Theorem \ref{main2}}

Before proving Theorem \ref{main2} let us introduce a final convenient definition.

\begin{dfn}
	Given an oriented graph $H$, a collection of $t$ edge-disjoint Hamilton cycles $\{C_1,\ldots,C_t\}$ of $G$ is called an $(H,t)_{\mathcal C}$-family. Let $\mathcal C(H,t)$ denote the set of all $(H,t)_{\mathcal C}$-families of $H$.
\end{dfn}

We are now ready for the proof of Theorem \ref{main2}.

\begin{proof}[Proof of Theorem \ref{main2}] Let $c>3/8$ be fixed and $d = cn$. We would like to show that given any $\varepsilon >0$ and a large enough $n$, every $d$-regular oriented graph $G$ on $n$ vertices satisfies
	\begin{equation*}	
		\Big |\mathcal C(G,d)\Big |
			\geq
		n^{(1-\varepsilon )dn}.
	\end{equation*}
Let $K = \log n$ and $\alpha = \varepsilon /4$. Our proof proceeds in five steps.\vspace{1.5mm}

\noindent \textbf{Step 1.} Removing a $\delta$-absorbing subgraph from $G$.\vspace{1mm}
	
By Theorem \ref{thm: Kuhn-Osthus Absorbing}, there exists $\delta>0$ such that $G$ contains a $\delta$-absorber subgraph $A$, where $A$ is $a$-regular, with $a\leq \alpha n$. Fix such a choice of $A$ and let $G_0:=G\setminus A$.
\vspace{3mm}

{\bf Step 2.} Partitioning $G_0$.\vspace{1.5mm}
	
Note that $G_0$ is $d':=cn-a$ regular with $\beta:=d'/n>3/8$. Therefore, taking $D=G$ and $\varepsilon _0 = \varepsilon /10$, applying Lemma \ref{partition lemma}, one can find $K^3$ edge-disjoint spanning subgraphs $H_1,\ldots,H_{K^3}$ of $G_0$ satisfying:
\begin{enumerate}
			\item For each $H_i$ there is a partition $V(G) = U_i \cup W_i$ with
			$|W_i| =n/K^2 \pm 1$;
			\item Letting $D_i = H_i[U_i]$, with $r \geq (1-2\varepsilon_0 )d'/K^3$, we have $$r -r^{3/5}\leq  \delta^0(D_i)\leq \Delta^0(D_i)\leq
			r +r^{3/5};$$
			\item Letting $E_i = H_i[U_i, W_i]$ we have
			$d^{\pm} _{E_i} (u,W_i) \geq \varepsilon_0 |W_{i}|/4K$ for all $u \in U_i$;
			\item Letting $F_i = H_i[W_i]$ we have $\delta ^{0}(F_i)
			\geq (\beta - \varepsilon _0)|W_i| $.
		\end{enumerate}

\noindent \textbf{Step 3.}  Showing that for some $t=r-o(r)$ and for every $i\in [K^3]$ the set ${\mathcal C}(H_i,t) $ is large. \vspace{1.5mm}

To this end, let us first apply Lemma \ref{lem: path cover lemma} to each of the $D_i$s (note that by Property $3$ above, the assumptions are fulfilled, and that $|U_i|=m=(1-o(1))n$). It thus follows that for every $i$ we have a collection
$$\mathcal P_i\subseteq  \mathcal P(D_i,n/\log^4n,r-n/\log^{4}n) $$
which satisfies
$$\left| \mathcal P_i \right|\geq r^{(1-o(1))rn},$$
such that $\delta^0(D_{{\bf P}_i})\geq r-n/\log^{4}n$ for all ${\bf P}_i\in \mathcal P_i$.

Therefore, by Properties $4$, $5$ and the lower bound on $\delta ^0(D_{{\bf P}_i})$, the hypothesis of Lemma \ref{lem: completion of many hamilton cycles} apply to $H_i$ and ${\bf P}_i$, taking $a = b=n/\log^{4}n$, $t = r - n/\log ^4n$ and $s = |W_i| = n/K^2 \pm 1$. This lemma allows us to turn ${\bf P}_i$ into a collection of $t=r-n/\log^4 n$ edge-disjoint Hamilton cycles. Noting that we fix the $W_i$ sets throughout the proof, we can trivially recover the path cover used to build each of the cycles. Therefore, for all $i\in [K^3]$ we have
	\begin{equation}
		\label{equation: HC count}
		|\mathcal C(H_{i},t)|\geq \left|\mathcal P_i\right|
			\geq
		r^{(1-o(1))rn}.
	\end{equation}

\noindent \textbf{Step 4.} Showing that $G_0$ has $n^{(1-\varepsilon)dn}$ `almost Hamilton decompositions'.\vspace{1.5mm}
	
To see this, note that if we pick ${\bf C}_i\in \mathcal C(H_i,t)$ for all $i$, then ${\bf C}=\bigcup_i {\bf C}_i\in \mathcal C(G_0,K^3t)$.
Therefore, by \eqref{equation: HC count}, for $t'=K^3t$ we conclude that
\begin{align}\label{eq: count}\left|\mathcal C(G_0,t')\right|&\geq r^{(1-o(1))rnK^3}\geq d^{(1-\varepsilon/5)d'n} \geq n^{(1-\varepsilon/4)(1-\alpha)dn}\geq n^{(1-\varepsilon /2)dn}.
\end{align}
	
	\noindent \textbf{Step 5.} Completing every ${\bf C}\in {\cal C}(G_0,t')$ to a
	Hamilton decomposition of $G$.\vspace{1.5mm}
	
	Let ${\bf C}\in {\cal C}(G_0,t')$ and note that $G'=G_0\setminus {\bf C}$ is a
	$b$-regular oriented graph with $b=o(n)$. Since $A:=G\setminus G_0$ is a
	$\delta$-absorber, and $b<\delta n$, it follows from Theorem
	\ref{thm: Kuhn-Osthus Absorbing} that $A \cup G'$ has a Hamilton decomposition
	${\bf C}'$. But then ${\bf C} \cup {\bf C}'$ is a Hamilton decomposition of
	$G$. Lastly, note that although different choices of ${\bf C} \in {\cal C}(G_0,t')$
	may give rise to the same Hamilton decomposition in this way, it is easy to see
	that each such decomposition occurs at most $\binom {d}{t'} \leq 2^n$ times.
	By \eqref{eq: count}, this gives
		\begin{equation*}
			|{\cal C}(G,d)| \geq |{\cal C}(G_0,t')| /2^n \geq n^{(1-\varepsilon )dn}.
		\end{equation*}	
	This completes the proof.
\end{proof}
	
\section{Concluding remarks}

In this paper we have given bounds on the number of Hamilton decompositions of dense regular oriented graphs. Theorem \ref{main} shows that if $G$ is an $r$-regular $n$-vertex oriented graph, with $r = cn$ for some fixed $c> 3/8$, then it has $r^{(1+o(1))rn}$ Hamilton decompositions. As indicated in the Introduction this bound is tight for every such graph, up to the $o(1)$-term in the exponent. 

We believe that such oriented graphs should in fact have $\big ((1+o(1))\frac{r}{e^2} \big )^{rn}$ Hamilton decompositions. This would agree with the more precise upper bound obtained from the Minc conjecture in the Introduction. To prove this seems to require a version of Theorem \ref{thm: Kuhn-Osthus Absorbing} which can be applied to oriented graphs with sublinear density. In this respect, it would be very interesting to obtain an alternative proof of Kelly's conjecture that does not make use of regularity, as it seems likely to lead to such a theorem.


\begin{thebibliography}{99}

\bibitem{AlonSpencer} N. Alon, J. Spencer, \textbf{The Probabilistic Method},
	3rd ed., John Wiley and Sons (2008).
	
\bibitem{Bol} B. Bollob\'as, \textbf{Modern Graph Theory},
	Springer, New York (1998).
%
\bibitem{Bregman} L. M. Br\'egman, \emph{Some properties of non-negative matrices and their permanents}, Sov. Mat. Dokl. (1973) 14, 945--949.

\bibitem{Egorychev}
G. Egorychev, \emph{The solution of the Van der Waerden problem for
permanents}, Dokl. Akad. Nauk SSSR (1981) 258, 1041--1044.

\bibitem{Falikman}
D. Falikman, \emph{A proof of the Van der Waerden problem for
permanents of a doubly stochastic matrix}, Mat. Zametki (1981) 29,
931--938.

\bibitem{FKL} A. Ferber, G. Kronenberg, and E. Long, \emph{Packing, Counting and Covering Hamilton cycles in random directed graphs}, to appear in Israel J. Math., arXiv:1506.00618.

\bibitem{Gal} D. Gale, \emph{A theorem on flows in networks}, Pacific J. Math. (1957)
\textbf{7}, 1073-1082.

\bibitem{GLS}
R. Gelbov, Z. Luria and B. Sudakov, The number of Hamiltonian decompositions of regular graphs, {\em Israel Journal of Mathematics}, to appear.

\bibitem{Haa} R. H\"aggkvist, \emph{Hamilton cycles in oriented graphs}, Combin. Probab. Comput. (1993) \textbf{2}, 25--32.

\bibitem{karp1972reducibility}
R.~M. Karp.
\newblock {\em Reducibility among combinatorial problems}.
\newblock Springer, 1972.

\bibitem{KeeKO} P. Keevash, D. K\"uhn and D. Osthus, \emph{An exact minimum degree condition for Hamilton cycles in oriented graphs}, J. Lond. Math. Soc. (2009) \textbf{79}, 144--166.

\bibitem{KKO} L. Kelly, D. K\"uhn and D. Osthus, \emph{A Dirac type result on Hamilton cycles in oriented graphs}, Combin. Probab. Comput. (2008) \textbf{17}, 689--709.

\bibitem{KOSurvey} D. K\"uhn and D. Osthus, \emph{A survey on Hamilton cycles in directed graphs}, European J. Combinatorics (2012) \textbf{33}, 750--766.

\bibitem{KOI} D. K\"uhn and D. Osthus, \emph{Hamilton cycles in graphs and hypergraphs: an extremal perspective}, Proceedings of the International Congress of Mathematicians 2014, Seoul, Korea, Vol 4, 381--406.

\bibitem{KO} D. K\"uhn and D. Osthus, \emph{Hamilton decompositions of regular expanders: a proof of Kelly’s conjecture for large tournaments}, {Adv. Math.} (2013) \textbf{237}, 62--146.

\bibitem{KOT} D. K\"uhn, D. Osthus and A. Treglown, \emph{Hamilton decompositions of regular tournaments}, Proc. Lond. Math. Soc. (2010) \textbf{101}, 303--335.

\bibitem{Ry} H. J. Ryser, \emph{Combinatorial properties of matrices of zeros and ones},
Canad. J. Math. (1957) \textbf{9}, 371-377.

\bibitem{Till} T. W. Tillson, \emph{A Hamiltonian decomposition of $K^*_{2m}$, $2m \geq 8$},
J. Combin. Theory, Ser. B, (1980) (1) \textbf{29}, 68-74.

\end{thebibliography}
\end{document}